\documentclass[
a4paper 
,11pt 
,leqno 
,english 
]{scrartcl} 

\usepackage{my-preamble-article} 
\usepackage{my-macros} 
\usepackage{my-specific-macros} 
\usepackage{my-counters-related-preprints} 
\title{Existence of a pulled or pushed travelling front invading a critical point for parabolic gradient systems}
\author{\RamonsName{} and \myName}
\begin{document}
\hypersetup{pageanchor=false} 
\maketitle
\nnfootnote{%
\emph{2020 Mathematics Subject Classification:} 35B38, 35B40, 35K57.\\%
\emph{Key words and phrases:} parabolic gradient system, maximal linear invasion speed (linear spreading speed), maximal nonlinear invasion speed, pushed/pulled travelling front. 
}
\begin{abstract}
For nonlinear parabolic gradient systems of the form
\[
u_t = -\nabla V(u) + u_{xx}
\,,
\]
where the spatial domain is the whole real line, the state variable $u$ is multidimensional, and the potential function $V$ is coercive at infinity, the following result is proved: for every critical point of $V$ which is not a global minimum point, there exists a travelling front, either pushed or pulled, invading this critical point at a speed which is not smaller than its linear spreading speed. By contrast with previous existence results of the same kind, no further assumption is made (neither that the invaded critical point is a non-degenerate local minimum point, nor other assumptions ensuring pushed invasion).
\end{abstract}
\thispagestyle{empty}
\pagestyle{empty}
\newpage%
\tableofcontents
\newpage
\hypersetup{pageanchor=true} 
\pagestyle{plain}
\setcounter{page}{1}
\section{Introduction}
\subsection{System}
Let us consider the nonlinear parabolic system
\begin{equation}
\label{parabolic_system}
u_t=-\nabla V (u) + u_{xx}
\,,
\end{equation}
where the time variable $t$ and the space variable $x$ are real, the spatial domain is the whole real line, the function $(x,t)\mapsto u(x,t)$ takes its values in $\rr^d$ with $d$ a positive integer, and the nonlinearity is the gradient of a \emph{potential} function $V:\rr^d\to\rr$, of class $\ccc^2$, and \emph{coercive at infinity} in the following sense:
\begin{gather} 
\tag{$\text{H}_\text{coerc}$}
\lim_{R\to+\infty}\quad  \inf_{\abs{u}\ge R}\ \frac{u\cdot \nabla V(u)}{\abs{u}^2} >0
\,.
\label{hyp_coerc}
\end{gather}
Let us assume that
\begin{gather} 
\tag{$\text{H}_{\text{crit}}$}
\nabla V(0_{\rr^d})=0
\quad\text{and}\quad
V(0_{\rr^d})=0
\quad\text{and}\quad
\min_{u\in\rr^d}V(u)<0
\,.
\label{hyp_crit_point}
\end{gather}
In other words, the origin $0_{\rr^d}$ of $\rr^d$ is assumed to be a critical point which is \emph{not} a global minimum of $V$, and $V$ is normalized so that it takes the value $0$ at $0_{\rr^d}$.
\subsection{Travelling waves/fronts}
Let $c$ be a \emph{positive} real quantity. A \emph{wave travelling at the speed $c$} for system \cref{parabolic_system} is a function of the form $(x,t)\mapsto \phi(x-ct)$, where $\phi$ is a solution (with values in $\rr^d$) of the second order differential system
\begin{equation}
\label{syst_trav_front_order_2}
\phi''=-c\phi'+\nabla V(\phi) 
\,,
\end{equation}
which is equivalent (denoting $\phi'$ by $\varphi$) to the first order differential system 
\begin{equation}
\label{syst_trav_front_order_1}
\begin{pmatrix} \phi' \\ \varphi' \end{pmatrix} = \begin{pmatrix}\varphi \\ - c \varphi + \nabla V(\phi)  \end{pmatrix}
\,.
\end{equation}
The function $\phi$ is called the \emph{profile} of the travelling wave. 
\begin{definition}[travelling front invading the critical point $0_{\rr^d}$]
\label{def:travelling_front_invading_0_speed_c}
Let us call \emph{travelling front invading $0_{\rr^d}$ at the speed $c$} a wave travelling at the speed $c$ such that its profile $\xi\mapsto\phi(\xi)$ is \emph{nonconstant}, bounded (therefore globally defined), and satisfies the limit
\[
\phi(\xi)\to0_{\rr^d} 
\quad\text{as}\quad
\xi\to+\infty
\,.
\]
\end{definition}
Let $\SigmaCrit(V)$ denote the set of critical points of $V$; with symbols, 
\[
\SigmaCrit(V) = \{u\in\rr^d: \nabla V(u) = 0\}
\,.
\]
It follows from \cite[Lemma~9]{Risler_globCVTravFronts_2008} and \cite[conclusion~6 of Lemma~7.1]{Risler_globalBehaviour_2016} (among other possible references) that, if $\phi$ is the profile of a travelling front invading $0_{\rr^d}$ at the speed $c$ in the sense of \cref{def:travelling_front_invading_0_speed_c}, then the quantity 
\begin{equation}
\label{uniform_bound_profile_front}
\sup_{\xi\in\rr}\abs{\phi(\xi)}
\end{equation}
is bounded from above by a quantity depending only on $V$, and there exists a negative quantity $V_{\-\infty}$ such that the following limit holds:
\[
\dist\Bigl(\phi(\xi),\SigmaCrit(V)\cap V^{-1}\bigl(\{V_{-\infty}\}\bigr)\Bigr)\to 0
\quad\text{as}\quad
\xi\to -\infty
\,.
\]
\begin{remark}
Voluntarily, this definition of a travelling front slightly differs from the usual one, since it does not require $\phi(\xi)$ to approach a single critical point of $V$ as $\xi$ goes to $-\infty$. In most cases (for instance if the critical points of $V$ are isolated --- which is true for a generic potential $V$ --- or if $V$ is analytic, \cite{HarauxJendoubi_CVSol2ndOrderGradAnalytic_1998}), this actually makes no difference: for every travelling front in the sense of \cref{def:travelling_front_invading_0_speed_c}, the profile \emph{does} necessarily approach a single critical point of $V$ at the left end of space. However, for certain particular potentials, it may actually happen that the limit set, as $\xi$ goes to $-\infty$, of the profile $\phi(\xi)$ of such a travelling front is \emph{not} reduced to a single critical point; an example (for a potential $V$ of class $C^\infty$) is provided in \cite[Corollary~5.2]{JendoubiPolacik_nonStabSolSemilinHypElliptEquDamp_2003}. 
\end{remark}
\subsection{Linearization at the invaded critical point}
The linearization of the (equivalent) differential systems \cref{syst_trav_front_order_2,syst_trav_front_order_1} at the point $(0_{\rr^{2d}})$ reads:
\begin{equation}
\label{syst_trav_front_order_1_2_linearized}
\phi'' = -c\phi' + D^2V(0_{\rr^d})\cdot\phi
\quad\text{and}\quad
\frac{d}{d\xi}\begin{pmatrix}\phi \\ \varphi\end{pmatrix} = \begin{pmatrix} 0_d & I_d \\ D^2V(0_{\rr^d}) & -c I_d\end{pmatrix}\cdot\begin{pmatrix}\phi \\ \varphi\end{pmatrix}
\,,
\end{equation}
where $0_d$ and $I_d$ denote the $d\times d$ zero-matrix and identity matrix, respectively. Let $\mu_1,\dots,\mu_d$ denote the eigenvalues of $D^2V(0_{\rr^d})$, counted with algebraic multiplicity. Without loss of generality, it may be assumed that
\begin{equation}
\label{sort_mu_j}
\mu_1\le\dots\le\mu_d
\,. 
\end{equation}
The eigenvalues of system \cref{syst_trav_front_order_1_2_linearized} read
\[
-\frac{c}{2}\pm\sqrt{\frac{c^2}{4}+\mu_j} \quad\text{if}\quad -\frac{c^2}{4} \le \mu_j
\quad\text{and}\quad
-\frac{c}{2}\pm i\sqrt{-\frac{c^2}{4}-\mu_j} \quad\text{if}\quad \mu_j \le -\frac{c^2}{4}\,,
\quad 1\le j\le d
\,.
\]
\begin{definition}[maximal linear invasion speed]
\label{def:max_linear_invasion_speed}
Let us call \emph{maximal linear invasion speed} (associated with the critical point $0_{\rr^d}$) the (nonnegative) quantity $\cLinMax$ defined as
\[
\cLinMax = \left\{
\begin{aligned}
2\sqrt{-\mu_1} = 2\sqrt{\abs{\mu_1}} \quad&\text{if}\quad \mu_1 < 0\,, \\
0\quad&\text{if}\quad \mu_1 \ge 0
\,.
\end{aligned}
\right.
\]
\end{definition}
\begin{remark}
This quantity $\cLinMax$ is usually called \emph{linear spreading speed} in the literature, see for instance \cite{VanSaarloos_frontPropagationUnstableStates_2003}, and is referred as such in the abstract of this article. In the following, only the denomination \emph{maximal linear invasion speed} will be used, except in the statement of the main result (\cref{thm:main}) where the shorter denomination \emph{linear spreading speed} is chosen. 
\end{remark}
\subsection{Pushed/pulled travelling fronts}
Let us keep the previous notation, and let $\phi$ denote the profile of a front invading the critical point $0_{\rr^d}$ at the speed $c$. Since $\phi(\xi)$ goes to $0_{\rr^d}$ as $\xi$ goes to $+\infty$, there must exist some nonnegative quantity $\lambda$, equal to either $c/2$ or to the opposite of one of the real eigenvalues of the linearized systems \cref{syst_trav_front_order_1_2_linearized}, such that
\begin{equation}
\label{def_steepness}
\frac{\ln\abs{\phi(\xi)}}{\xi}\to -\lambda
\quad\text{as}\quad 
\xi\to+\infty
\,.
\end{equation}
\begin{definition}[steepness of a travelling front invading $0_{\rr^d}$]
\label{def:steepness}
Let us call \emph{steepness} of the front under consideration the (nonnegative) quantity $\lambda$ defined by the limit \cref{def_steepness}. 
\end{definition}
\begin{definition}[pushed/pulled travelling wave/front invading $0_{\rr^d}$]
\label{def:pushed_pulled_travelling_front}
A travelling front invading the critical point $0_{\rr^d}$ at some positive speed $c$ is said to be:
\begin{itemize}
\item \emph{pushed} if its steepness $\lambda$ satisfies the inequality
\[
\frac{c}{2}<\lambda
\,,
\]
or equivalently if the following limit holds for its profile $\phi$:
\[
\phi(\xi)  = o\bigl(e^{-\frac{1}{2}c\xi}\bigr)
\quad\text{as}\quad
\xi\to+\infty
\,;
\]
\item \emph{pulled} if its speed $c$ equals the maximal linear invasion speed $\cLinMax$ and its steepness $\lambda$ equals $\cLinMax/2$. 
\end{itemize}
\end{definition}
\begin{remark}
In \cref{def:pushed_pulled_travelling_front} above, the qualifier \emph{pulled} is applied in a broad sense: no distinction is made between the two subclasses of pulled fronts, respectively defined by the refined asymptotics: 
\[
\abs{\phi(\xi)} \underset{\xi\to+\infty}{\sim} K \xi e^{-\frac{1}{2}\cLinMax\xi}
\quad\text{and}\quad
\abs{\phi(\xi)} \underset{\xi\to+\infty}{\sim} K e^{-\frac{1}{2}\cLinMax\xi}
\,,
\]
for some positive quantity $K$. The first (generic) subclass contains the pulled fronts in a more restricted sense, whereas the second (non-generic, codimension one) subclass contains fronts that are at the transition between pulled and pushed fronts, \cite{MontieHolzerScheel_pushedToPulledFrontTransitions_2022}; those are sometimes qualified as \emph{pushmi-pullyu} or \emph{variational pulled}, depending on authors and context, \cite{AnHendersonRyzhik_quantitativeSteepnessSemiKPPPushmiPullyu_2023,Muratov_globVarStructPropagation_2004}. 
\end{remark}
\subsection{Main result}
The following statement, which is the main result of this paper, calls upon:
\begin{itemize}
\item \cref{def:max_linear_invasion_speed} of the linear spreading speed (called ``maximal linear invasion speed'' everywhere else in the paper), 
\item and \cref{def:pushed_pulled_travelling_front} of pushed/pulled travelling fronts. 
\end{itemize}
To emphasize its meaning, it is stated for a critical point $e$ which not necessarily $0_{\rr^d}$ (therefore hypothesis \cref{hyp_crit_point} is not called upon but stated with words), and under a coercivity at infinity condition which is not explicitly stated (hypothesis \cref{hyp_coerc} could be replaced with any other condition ensuring the boundedness of the solutions of the parabolic system \cref{parabolic_system}). By contrast, the proof will be carried out assuming hypotheses \cref{hyp_crit_point,hyp_coerc}, and for the critical point $0_{\rr^d}$. 
\begin{theorem}
\label{thm:main}
For every potential function $V$ in $\ccc^2(\rr^d,\rr)$ which is coercive at infinity and every critical point $e$ point of $V$ which is not a global minimum point, there exists a travelling front, invading $e$, and which is:
\begin{itemize}
\item either \emph{pulled} (therefore travelling at the linear spreading speed of $e$), 
\item or \emph{pushed} and travelling at a speed greater than or equal to this linear spreading speed.
\end{itemize}
\end{theorem}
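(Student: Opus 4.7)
The plan is to obtain a travelling front invading $0_{\rr^d}$ (after normalizing so that $e=0_{\rr^d}$ and hypotheses \cref{hyp_crit_point,hyp_coerc} hold) as a space-time limit, in a suitable moving frame, of the solution of the parabolic system \cref{parabolic_system} starting from a compactly supported initial datum that sits deep in the negative-potential region. Write $c_L := \cLinMax$ throughout.

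\textbf{Cauchy problem and nonlinear invasion speed.} I would first pick $u_0 \in \ccc^0_c(\rr,\rr^d)$ vanishing outside some interval $[-L,L]$ and equal to a global minimum point $u_{\min}$ of $V$ on most of $[-L,L]$, so that $u_0$ is attached to $0_{\rr^d}$ at $\pm\infty$ but lies deep in the region $V<0$. By \cref{hyp_coerc} and standard parabolic theory, the Cauchy solution $(x,t)\mapsto u(x,t)$ is globally defined, smooth for $t>0$, and uniformly bounded. The (rightward) \emph{nonlinear invasion speed}
\[
c^{\star} := \inf\bigl\{c\ge 0 : \sup\nolimits_{x\ge ct}\abs{u(x,t)}\to 0 \text{ as } t\to+\infty\bigr\}
\]
then satisfies $c_L \le c^{\star} < +\infty$: the upper bound comes from a scalar comparison for $\abs{u}^2$ based on \cref{hyp_coerc}, while the lower bound $c^{\star}\ge c_L$ is the usual linearized propagation argument, obtained by projecting $u$ onto the eigenvector of $D^2V(0_{\rr^d})$ associated with $\mu_1<0$.

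\textbf{Extraction of a profile in the moving frame.} With $c := c^{\star}$, I would next pick an interface-tracking sequence $(x_n,t_n)$ with $t_n\to+\infty$, $x_n/t_n\to c$, and $\abs{u(x_n,t_n)}=\varepsilon$, where $\varepsilon>0$ is fixed and smaller than the radius of attraction of $0_{\rr^d}$ under $\dot v=-\nabla V(v)$. Parabolic regularity provides uniform $\ccc^2$-bounds for $u(\cdot,t)$ on $t\ge 1$, hence $\phi_n(\xi) := u(\xi+x_n,t_n)$ is precompact in $\ccc^1_{\mathrm{loc}}(\rr)$, and its limit $\phi_\infty$ is nontrivial since $\abs{\phi_\infty(0)}=\varepsilon$. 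The key step is then to identify $\phi_\infty$ as a travelling profile at speed $c$. Here $\tilde u(\xi,t) := u(\xi+ct,t)$ solves $\tilde u_t = \tilde u_{\xi\xi} + c\,\tilde u_\xi - \nabla V(\tilde u)$, and the formal weighted Lyapunov functional
\[
\mathcal{E}_c[\tilde u(\cdot,t)] := \int_{\rr} e^{c\xi}\Bigl(\tfrac{1}{2}\abs{\tilde u_\xi}^2 + V(\tilde u)\Bigr)\,d\xi
\]
satisfies, formally, $\frac{d}{dt}\mathcal{E}_c = -\int_{\rr} e^{c\xi}\abs{\tilde u_t}^2\,d\xi$. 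Since this functional is in general ill-defined at $-\infty$ because of the $V<0$ bulk, I would localize it via firewall-type truncations and moving-frame weighted energy estimates (in the spirit of \cite{Risler_globCVTravFronts_2008,Risler_globalBehaviour_2016}) to conclude that $\partial_t\tilde u(\cdot,t_n)\to 0$ locally, which yields $\phi_\infty'' + c\,\phi_\infty' = \nabla V(\phi_\infty)$ after passing to the limit.

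\textbf{Boundary behaviour and pushed/pulled dichotomy.} By the interface-tracking choice of $x_n$, $\phi_\infty(\xi)\to 0_{\rr^d}$ as $\xi\to+\infty$; the a priori results recalled just after \cref{def:travelling_front_invading_0_speed_c} then ensure boundedness and, as $\xi\to-\infty$, convergence in distance to a level set $\SigmaCrit(V)\cap V^{-1}(\{V_{-\infty}\})$ with $V_{-\infty}<0$, so that $\phi_\infty$ is indeed a travelling front invading $0_{\rr^d}$ in the sense of \cref{def:travelling_front_invading_0_speed_c}. If $c^{\star}=c_L$, the linearized system \cref{syst_trav_front_order_1_2_linearized} has a double real eigenvalue $-c_L/2$ at $0_{\rr^d}$, forcing steepness $c_L/2$ and thereby a pulled front. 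If $c^{\star}>c_L$, finiteness of the weighted energy at $+\infty$ (which requires decay strictly faster than $e^{-c\xi/2}$) excludes the slow exponential mode of decay rate $c/2-\sqrt{c^2/4-\abs{\mu_1}}$, forcing steepness $c/2+\sqrt{c^2/4-\abs{\mu_1}}>c/2$ and thereby a pushed front. The main obstacle is the middle step: without non-degeneracy of $0_{\rr^d}$ or any a priori pushed structure, the weighted-energy analysis in the moving frame has to be executed with firewall truncations, so as to tame the contribution of the $V$-negative bulk while retaining enough control to pass to a limit profile; this is the technical heart of the argument.
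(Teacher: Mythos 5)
Your proposal follows the ``global convergence'' route: evolve a well-prepared initial datum under \cref{parabolic_system}, track the interface at the invasion speed $c^{\star}$, and extract a travelling profile in the moving frame via the weighted Lyapunov functional $\mathcal{E}_c$ with firewall truncations. This is not the paper's argument, and the step you yourself flag as ``the technical heart'' is precisely the point identified in the introduction as an open problem. The relaxation scheme of \cite{Risler_globCVTravFronts_2008,OliverBonafouxRisler_globCVPushedTravFronts_2023} closes only when the invaded point is a local minimum or when invasion is pushed ($\cNonLinMax>\cLinMax$), because then the weighted energy in the frame travelling at the invasion speed is bounded below on the relevant class of functions and the dissipation integral $\int e^{c\xi}\abs{\tilde u_t}^2$ is controlled. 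In the remaining case $\cNonLinMax=\cLinMax$ --- the only case where the theorem is not already known --- the energy of the expected pulled limit is generically $-\infty$, $\inf\eee_{c,V}=-\infty$ for every $c<\cLinMax$ and $=0$ at $c=\cLinMax$, and no firewall construction is currently known that tames the $V<0$ bulk while still forcing $\partial_t\tilde u\to 0$. So the middle step of your argument is not a technicality to be ``executed''; it is the missing theorem (related to the marginal stability conjecture).

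There is a second, independent gap in your final dichotomy. At $c^{\star}=\cLinMax$ the double eigenvalue $-\cLinMax/2$ occurs only in the $\mu_1$-block of \cref{syst_trav_front_order_1_2_linearized}; for $d>1$ any eigenvalue $\mu_j$ with $\mu_1<\mu_j<0$ contributes a decay mode of rate $\cLinMax/2-\sqrt{\cLinMax^2/4+\mu_j}\in(0,\cLinMax/2)$, so a limit profile could a priori be a ``mild'' front, neither pushed nor pulled; steepness $\cLinMax/2$ is not forced by the linear algebra alone. Excluding these mild modes requires an extra dynamical input. For contrast, the paper's route avoids both obstacles: it adds a $C^1$-small but $C^2$-large perturbation $W_{\varepsilon,\delta}=V+\chi\bigl((\norm{u}-\varepsilon)/\delta\bigr)\tfrac{1}{2}\nu u^2$ turning $0_{\rr^d}$ into a nondegenerate local minimum, invokes the known pushed-front existence for $W_{\varepsilon,\delta}$, proves $\cNonLinMax[W_{\varepsilon,\delta}]\to\cLinMax$, and passes to the limit; nontriviality of the limit and exclusion of mild decay are obtained by an energy-balance argument at the potential barrier (crossing with a definite residual ``impulse'' for $\nu$ large) followed by a positively invariant cone around the half-speed unstable subspace. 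If you want to salvage your approach, you would need to either prove the global convergence statement for pulled invasion (open) or replace the PDE evolution by a device, such as the paper's potential perturbation, that reduces everything to the already-available pushed machinery.
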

As mentioned in the remark following \cref{def:pushed_pulled_travelling_front}, the qualifier \emph{pulled} called upon in this theorem must be understood in the broad sense, encompassing the pushmi-pullyu/variational subclass of pulled fronts. 
\subsection{Principle of the proof and bibliographical comments}
Under the stronger assumption that the critical point under consideration is a nondegenerate (local) minimum point of $V$, existence of a front invading this minimum point was proved in \cite[Corollary~1]{Risler_globCVTravFronts_2008} and independently in \cite{AlikakosKatzourakis_heteroclinicTW_2011}; in this case, the maximal linear invasion speed (\cref{def:max_linear_invasion_speed}) is zero, so that every front invading this minimum point is pushed in the sense of \cref{def:pushed_pulled_travelling_front}. \Cref{thm:main} is therefore an extension of these results to the case where the critical point is not necessarily a local minimum point. In both references, the proof relies on a variational structure in travelling frames which is known for long, \cite{FifeMcLeod_approachTravFront_1977}, although attempts to fully embrace its implications are more recent, notably following \cite{Muratov_globVarStructPropagation_2004}; for more detailed bibliographical comments, see for instance \cite{OliverBonafouxRisler_globCVPushedTravFronts_2023} and references therein.

Corollary~1 of \cite{Risler_globCVTravFronts_2008} was obtained as a consequence of a result about global convergence towards travelling fronts invading a non-degenerate local minimum point (Theorem~1 of the same reference). Using a similar approach and crucial ideas introduced in \cite{GallayJoly_globStabDampedWaveBistable_2009}, this result was recently extended to the pushed invasion of a critical point, \cite{OliverBonafouxRisler_globCVPushedTravFronts_2023}. Again, existence of pushed fronts (\cite[Theorem~2]{OliverBonafouxRisler_globCVPushedTravFronts_2023}) follows as a corollary, provided that invasion might occur at a speed which is larger than the maximal linear invasion speed of the invaded critical point. A similar existence result had already been obtained for gradient systems in infinite cylinders, \cite{LuciaMuratovNovaga_existTWInvasionGLCylinders_2008}, although not as a corollary of a global convergence result. 

The scope of the present paper thus reduces to the remaining case where invasion does \emph{not} occur at a speed larger than the maximal linear invasion speed (hypothesis \cref{hyp_pulled} below). Only the existence of a pulled or pushed travelling front is pursued, and for this purpose, the following basic strategy turns out to be sufficient: a $C^1$-small (although not $C^2$-small) perturbation of the potential in the vicinity of the invaded critical point allows to turn this critical point into a local minimum point (for a broader analysis of such perturbations as a basis for pulled fronts selection, see \cite{PaquetteChen_structStabRenGroupPropFronts_1994}). For this perturbed potential, \cite[Theorem~1]{Risler_globCVTravFronts_2008} provides the existence of a pushed travelling front invading this critical point; and a compactness argument provides the intended front (for the unperturbed potential) as a limit when the size and scope of the perturbation go to $0$. The sole difficulties are to prove that the profiles of the fronts for the perturbed potentials do not reduce to smaller and smaller neighbourhoods of the invaded critical point as the perturbation goes to $0$, and that the limit front (the speed of which is, by construction, equal to $\cLinMax$) is either pushed or pulled. This is achieved by an appropriate choice of the perturbation: basically, by choosing the perturbation large enough in the $C^2$-topology so that the Hessian matrix of $0_{\rr^d}$ for the perturbed potential is not only positive definite, but with large enough eigenvalues (see the comments in \cref{subsec:travelling_fronts_perturbed_potentials}). 

It would be way more satisfactory to derive the existence of a pulled front (in the absence of pushed fronts) as a consequence of a global convergence result, as was achieved in \cite{Risler_globCVTravFronts_2008} for invasion of local minimum points and in \cite{OliverBonafouxRisler_globCVPushedTravFronts_2023} for the pushed invasion of critical points. Indeed, one may conjecture that, under the assumptions \cref{hyp_coerc,hyp_crit_point}, for every solution invading $0_{\rr^d}$ with a profile that approaches $0_{\rr^d}$ fast enough to the right end of space, invasion occurs at some well defined speed which is at least equal to $\cLinMax$ and through profiles of either pulled or pushed travelling fronts (if this was known, the existence result \cref{thm:main} would once again follow as a corollary). But the variational approach used in \cite{Risler_globCVTravFronts_2008,OliverBonafouxRisler_globCVPushedTravFronts_2023} does not easily apply to pulled invasion, to begin with because the energy of a pulled front is in most cases equal to $-\infty$ (except for the pushmi-pullyu/variational pulled class). Pushed invasion, which presents strong similarities with the ``local minimum invasion'' case because the speed of the invasion is large enough to prevent the instability of the invaded critical point to develop, is in this respect much easier to tackle. And the global convergence result mentioned above, expected to occur even for pulled invasion (and related, in a broader context, to the ``marginal stability conjecture'', see \cite{AveryScheel_universalSelectionPulledFronts_2022} and references therein)  is currently, to the best knowledge of the authors, an open question. At least, the existence result proved in the present paper provides an additional support, for the parabolic gradient systems considered here, for its plausibility, and ensures the validity of a key assumption made in recent progresses towards its proof (see the remark at the end of the next \namecref{sec:preliminaries}).
\section{Preliminaries: energy in travelling frames and maximal nonlinear invasion speed}
\label{sec:preliminaries}
Let $V$ denote a potential function in $\ccc^2(\rr^d,\rr)$ satisfying assumptions \cref{hyp_coerc,hyp_crit_point}. Let $c$ denote a positive quantity, and let us consider the following weighted Sobolev space:
\[
\begin{aligned}
H^1_c(\rr,\rr^d) &= \bigl\{ w \in \HoneLoc(\rr,\rr^d) : \text{ the functions } \xi\mapsto e^{\frac{1}{2}c\xi} w(\xi) \\
\nonumber
&\qquad\qquad\text{and } \xi\mapsto e^{\frac{1}{2}c\xi} w'(\xi) \text{ are in } L^2(\rr,\rr^d)\bigr\}
\,.
\end{aligned}
\]
Everywhere in the paper, for $u$ in $\rr^d$, the usual Euclidean norm of $u$ is denoted by $\abs{u}$, and $\abs{u}^2$ is simply written as $u^2$. 
\begin{definition}[energy in a frame travelling at the speed $c$]
\label{def:eee_c}
For every $w$ in $H^1_c(\rr,\rr^d)$, let us call \emph{energy (Lagrangian) of $w$ in the frame travelling as speed $c$}, and let us denote by $\eee_{c,V}[w]$, the quantity defined by the integral:
\begin{equation}
\label{def_eee_c}
\eee_{c,V}[w] = \int_{\rr}e^{c\xi}\left(\frac{1}{2} w'(\xi)^2 + V\bigl(w(\xi)\bigr)\right)\, d\xi
\,.
\end{equation}
\end{definition}
Let us consider the quantity
\[
\mathfrak{I}(c) = \inf_{w\in H^1_c(\rr,\rr^d)} \eee_c[w]
\,,
\]
and the subsets $\ccc_{-\infty}$ and $\ccc_0$ of $(0,+\infty)$, defined as
\begin{equation}
\label{def_C_minus_infty_C_0}
\ccc_{-\infty} = \{c\in(0,+\infty): \mathfrak{I}(c) = -\infty \}
\quad\text{and}\quad
\ccc_0 = \{c\in(0,+\infty): \mathfrak{I}(c) = 0 \}
\,.
\end{equation}
It turns out (\cite[Corollary~1.12]{OliverBonafouxRisler_globCVPushedTravFronts_2023}) that there exists a (finite) positive quantity $\cNonLinMax$ such that
\begin{equation}
\label{full_properties_ccc_minus_infty_and_ccc_0}
\ccc_{-\infty} = (0,\cNonLinMax)
\quad\text{and}\quad
\ccc_0 = [\cNonLinMax,+\infty)
\,;
\end{equation}
in addition (\cite[Proposition~2.6]{OliverBonafouxRisler_globCVPushedTravFronts_2023}), 
\begin{equation}
\label{cLinMax_less_or_equal_than_cNonLinMax}
(0,\cLinMax)\subset\ccc_{-\infty}
\,,\quad\text{so that}\quad
\cLinMax\le\cNonLinMax
\,.
\end{equation}
\begin{definition}[maximal nonlinear invasion speed]
\label{def:max_nonlin_invasion_speed}
Let us call \emph{maximal nonlinear invasion speed of the critical point $0_{\rr^d}$} the quantity $\cNonLinMax$ defined by the equalities \cref{full_properties_ccc_minus_infty_and_ccc_0}.
\end{definition}
According to \cite[Theorem~2]{OliverBonafouxRisler_globCVPushedTravFronts_2023}, if 
\[
\cNonLinMax > \cLinMax
\,,
\]
then there exists a pushed front invading $0_{\rr^d}$ at the speed $\cNonLinMax$. In this case, the conclusions of \cref{thm:main} are therefore automatically satisfied. Thus, in view of the second assertion of \cref{cLinMax_less_or_equal_than_cNonLinMax}, it is sufficient to prove \cref{thm:main} under the following additional assumption:
\begin{gather} 
\tag{$\text{H}_{\text{nl-max}=\text{l-max}}$}
\cNonLinMax = \cLinMax
\,.
\label{hyp_pulled}
\end{gather}
It follows from this last assumption that 
\begin{equation}
\label{mu_1_negative}
\cLinMax > 0 
\,,\quad\text{or equivalently}\quad
\mu_1 < 0
\,,
\end{equation}
and that
\begin{equation}
\label{expression_cLinMax}
\cLinMax = 2\sqrt{-\mu_1}
\,,\quad\text{or equivalently}\quad
\mu_1 = - \frac{\cLinMax^2}{4}
\,.
\end{equation}
\begin{remarks}
If $\cNonLinMax$ equals $\cLinMax$, \cref{thm:main} ensures the existence of a front, either pulled \emph{or} pushed, travelling at the speed $\cLinMax$. However, a legitimate question is to wonder whether, in this case, the stronger conclusion stating the existence of a \emph{pulled} front always holds (as everywhere else, ``pulled'' is to be understood in the broader sense including the pushmi-pullyu/variational class, see the remark following \cref{def:pushed_pulled_travelling_front}). Three arguments support (when $\cNonLinMax$ equals $\cLinMax$) the plausibility of this stronger conclusion: 
\begin{enumerate}
\item Generically (for a generic potential $V$), there is no pushed front at the speed $\cLinMax$ \cite{JolyOliverBRisler_genericTransvPulledPushedTFParabGradSyst_2023}, so that, if $\cNonLinMax$ equals $\cLinMax$, this stronger conclusion follows from \cref{thm:main}.
\item If a pushed front travelling at the speed $\cLinMax$ exists, it can be argued that this pushed front is only marginally stable around its leading edge, precisely because the linear spreading speed of the instabilities at the invaded critical point is equal to the speed of this front, see \cite{OliverBonafouxRisler_multiplicityPushedFronts_2023} for a related calculation; 
\item Last, the authors have not been able to produce an example where $\cNonLinMax$ equals $\cLinMax$ and no pulled front exists.
\end{enumerate}
Unfortunately, this stronger conclusion does not follow from the proof provided in this paper and is therefore, to the best knowledge of the authors, an open question. 

Notice also that the existence of a pulled front is a key hypothesis in recent attempts to prove (under additional spectral assumptions) its global stability (in other words, to progress towards the marginal stability conjecture), \cite{AveryScheel_asymptStabCritPulledFrontsResExpansions_2021,AveryScheel_universalSelectionPulledFronts_2022}. Those references deal with (a broad class of) scalar equations, but (as claimed by their authors) there is little doubt that their results can be extended to systems. To be more precise, combining \cref{thm:main} with the conclusions of \cite{JolyOliverBRisler_genericTransvPulledPushedTFParabGradSyst_2023}, it follows that, for a \emph{generic} potential satisfying assumptions \cref{hyp_coerc}, \cref{hyp_crit_point}, and \cref{hyp_pulled}, there exists (for the gradient system \cref{parabolic_system}) a pulled travelling front (in the restricted sense, that is excluding the pushmi-pullyu/variational class) invading $0_{\rr^d}$ (at the speed $\cLinMax$) for which most of the hypotheses made in \cite{AveryScheel_universalSelectionPulledFronts_2022} (and used in this reference to prove that this front is ``selected'') are fulfilled; namely, hypotheses 1, 2, 3, and the ``non-resonance/transversality'' part of hypothesis 4. The remaining part of hypothesis 4 (no unstable point spectrum) cannot be ensured. However, if it is not fulfilled, that is if the pulled front presents some point spectrum instability, then it is expected that the unstable manifold of this pulled front leads to another pulled front for which (again for a generic potential) point spectrum stability holds (to the best knowledge of the authors, this ``expected'' conclusion is an open question). 
\end{remarks}
\section{Proof}
Let $V$ denote a potential function in $\ccc^2(\rr^d,\rr)$ satisfying assumptions \cref{hyp_coerc}, \cref{hyp_crit_point}, and \cref{hyp_pulled}. Our goal is to prove the existence of a pulled front (\cref{def:pushed_pulled_travelling_front}) invading $0_{\rr^d}$ for this potential. 
\subsection{Local perturbation of the potential}
Let us denote by 
\begin{equation}
\label{canonical_basis}
(\mathfrak{u}_1,\dots,\mathfrak{u}_d)
\end{equation}
the canonical basis of $\rr^d$. Without loss of generality, it may be assumed that
\begin{equation}
\label{hessian_potential_V}
D^2V(0_{\rr^d}) = \diag(\mu_1,\dots,\mu_d)
\,,
\end{equation}
so that, for every $u$ in $\rr^d$, 
\begin{equation}
\label{hessian_potential_with_argument}
D^2V(0_{\rr^d})\cdot u \cdot u = \sum_{j=1}^d \mu_j u_j^2 
\,,
\end{equation}
and recall that, according to hypothesis \cref{hyp_pulled}, the least eigenvalue $\mu_1$ of $D^2V(0_{\rr^d})$ is negative (inequalities \cref{mu_1_negative}). Let $\nu$ denote a positive quantity to be chosen later (see the condition \vref{second_condition_on_nu}), satisfying the inequality
\begin{equation}
\label{first_condition_on_nu}
\mu_1 + \nu >0
\,,\quad\text{or equivalently,}\quad
\nu > -\mu_1 = \abs{\mu_1}
\,,
\end{equation}
and let us consider the (positive definite) quadratic form $q$ on $\rr^d$, defined as: 
\[
q(u) = \frac{1}{2} \nu u^2
\,.
\]
For $j$ in $\{1,\dots,d\}$, let us consider the two quantities $\lambdaPert_{j,\pm}$ defined as
\begin{equation}
\label{def_lambda_pert}
\lambdaPert_{j,\pm} = \frac{\cLinMax}{2} \pm \sqrt{\frac{\cLinMax^2}{4}+\mu_j + \nu}
\,;
\end{equation}
in this notation, the exponent ``pert'' refers to the fact that these quantities represent eigenvalues associated with the perturbed potential $W_{\varepsilon,\delta}$ defined below. 
According to the condition \cref{first_condition_on_nu} on $\nu$ and inequalities \cref{sort_mu_j},
\begin{equation}
\label{signs_of_lambda_left_pm}
\lambdaPert_{d,-} \le \cdots \le \lambdaPert_{1,-} < 0 < \lambdaPert_{1,+} \le \cdots \le \lambdaPert_{d,+}
\,.
\end{equation}
For $u=(u_1,\dots,u_d)$ in $\rr^d$, besides the canonical Euclidean norm $\abs{u}$, let us consider a second norm $\norm{u}$ defined as
\begin{equation}
\label{def_norm_alt}
\norm{u} = (\lambdaPert_{1,+})^{-\frac{1}{2}}\left(\sum_{j=1}^d \lambdaPert_{j,+} u_j^2\right)^{\frac{1}{2}}
\,.
\end{equation}
The reason for introducing this norm will appear in \cref{subsubsec:crossing_potential_barrier}, see \cref{fig:trajectories}; thanks to the factor $(\lambdaPert_{1,+})^{-\frac{1}{2}}$, this second norm $\norm{\cdot}$ is equal to the canonical Euclidean norm on $\spanset(\mathfrak{u_1})$, which will ease the writing of the proof of \cref{prop:nonlinear_invasion_speed_perturbed_potential} below. Let $\chi:\rr\to\rr$ denote a smooth cutoff function satisfying
\begin{equation}
\label{cut_off}
\chi(x) = \left\{
\begin{aligned}
1 \quad\text{if}\quad x\le 0\,,\\
0 \quad\text{if}\quad 1\le x\,,
\end{aligned}
\right.
\quad\text{and, for all $x$ in $\rr$,}\quad
0\le \chi(x)\le 1 
\quad\text{and}\quad
\chi'(x)\le 0
\,.
\end{equation}
Let $\varepsilon$ and $\delta$ denote two small positive quantities, and let us consider the potential function $W_{\varepsilon,\delta}:\rr^d \to\rr$, defined as
\begin{equation}
\label{def_W_epsilon_delta}
W_{\varepsilon,\delta}(u) = \chi\left(\frac{\norm{u}-\varepsilon}{\delta}\right) q(u) + V(u)
\,,
\end{equation}
see \cref{fig:W_epsilon_delta}. 
\begin{figure}[htbp]
\centering
\includegraphics[width=\textwidth]{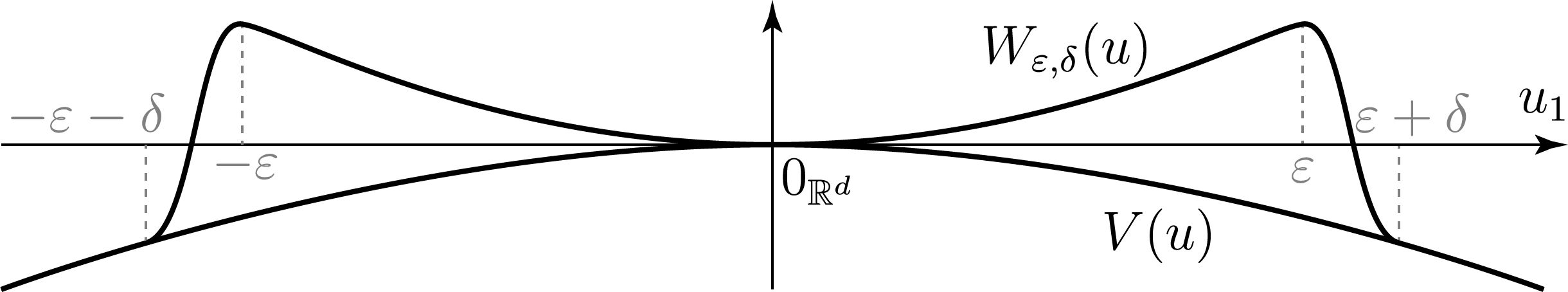}
\caption{Potentials $V$ and $W_{\varepsilon,\delta}$ (the horizontal axis represents the coordinate $u_1$; along this axis the norm $\norm{\cdot}$ is equal to the standard Euclidean norm).}
\label{fig:W_epsilon_delta}
\end{figure}
It follows from this definition that
\begin{equation}
\label{basic_properties_W_minus_V}
W_{\varepsilon,\delta}(0_{\rr^d}) = V(0_{\rr^d})
\quad\text{and, for every $u$ in $\rr^d$, }\quad
W_{\varepsilon,\delta}(u) \ge V(u)
\,,
\end{equation}
and that the Hessian matrix of $W_{\varepsilon,\delta}$ at $0_{\rr^d}$ does not depend on $(\varepsilon,\delta)$; more precisely, according to \cref{hessian_potential_V},
\begin{equation}
\label{hessian_perturbed_potential}
D^2W_{\varepsilon,\delta}(0_{\rr^d}) = \diag(\mu_1+\nu,\dots,\mu_d+\nu)
\,;
\end{equation}
and, according to inequality \cref{first_condition_on_nu}, the least eigenvalue $\mu_1 + \nu$ of this Hessian matrix is positive, so that $0_{\rr^d}$ is a non-degenerate local minimum point for $W_{\varepsilon,\delta}$. 
\subsection{Nonlinear invasion speed for the perturbed potential}
Following the notation of \cref{def:max_linear_invasion_speed,def:max_nonlin_invasion_speed}, let us denote by 
\[
\cLinMax[W_{\varepsilon,\delta}]
\quad\text{and}\quad
\cNonLinMax[W_{\varepsilon,\delta}]
\]
the maximal linear invasion speed and the maximal nonlinear invasion speed (respectively) of the critical point $0_{\rr^d}$ for the potential $W_{\varepsilon,\delta}$. Since $0_{\rr^d}$ is a (nondegenerate) local minimum point of $W_{\varepsilon,\delta}$, it follows (from \cref{def:max_linear_invasion_speed}) that 
\[
\cLinMax[W_{\varepsilon,\delta}] = 0
\,.
\]
In addition, if $\varepsilon$ and $\delta$ are small enough so that $0_{\rr^d}$ is not a global minimum point of $W_{\varepsilon,\delta}$, then it follows from \cite[\GlobCVPushedCorNonEmptinessCminusInfty]{OliverBonafouxRisler_globCVPushedTravFronts_2023} that
\[
0 < \cNonLinMax[W_{\varepsilon,\delta}] 
\,. 
\]
Recall that, according to hypothesis \cref{hyp_pulled}, the same two speeds for the potential $V$ (instead of $W_{\varepsilon,\delta}$) are equal; up to now they were denoted by $\cLinMax$ and $\cNonLinMax$, from now they will always be denoted as $\cLinMax$. It follows from the properties \cref{basic_properties_W_minus_V} (and from definition \cref{def_C_minus_infty_C_0}) that
\[
\ccc_{-\infty}[W_{\varepsilon,\delta}] \subset \ccc_{-\infty}[V]
\,,
\]
and as a consequence, it follows from equalities \cref{full_properties_ccc_minus_infty_and_ccc_0} that
\begin{equation}
\label{cNonLinMax_of_W_not_larger_than_cNonLinMax_of_V}
\cNonLinMax[W_{\varepsilon,\delta}] \le \cLinMax
\,.
\end{equation}
Since $\varepsilon$ and $\delta$ are assumed to be small, the difference $W_{\varepsilon,\delta} - V$ is $\ccc^1$-small, but not $\ccc^2$-small; the support of this difference is included in $\widebar{B}_{\rr^d}(0_{\rr^d},\varepsilon+\delta)$. 
\begin{proposition}[nonlinear invasion speed for the perturbed potential]
\label{prop:nonlinear_invasion_speed_perturbed_potential}
The following limit holds: 
\begin{equation}
\label{nonlinear_invasion_speed_perturbed_potential}
\cNonLinMax[W_{\varepsilon,\delta}] \to \cLinMax
\quad\text{as}\quad
(\varepsilon,\delta)\to(0,0)
\,.
\end{equation}
\end{proposition}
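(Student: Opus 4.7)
The inequality $\cNonLinMax[W_{\varepsilon,\delta}] \le \cLinMax$ being already established in \cref{cNonLinMax_of_W_not_larger_than_cNonLinMax_of_V}, the plan is to prove the matching lower bound: for every fixed $c$ in $(0, \cLinMax)$, to show that $c \le \cNonLinMax[W_{\varepsilon,\delta}]$ as soon as $(\varepsilon, \delta)$ is small enough. Thanks to the dichotomy recorded in \cref{full_properties_ccc_minus_infty_and_ccc_0} (applied to $W_{\varepsilon,\delta}$), which states that the infimum of $\eee_{c,W_{\varepsilon,\delta}}$ over $H^1_c(\rr,\rr^d)$ can only take the values $0$ or $-\infty$, this reduces to the task of exhibiting a \emph{single} function $w$ in $H^1_c(\rr,\rr^d)$ such that $\eee_{c,W_{\varepsilon,\delta}}[w] < 0$.

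The key observation is that a suitable test function is already at hand for the unperturbed potential $V$. Indeed, by hypothesis \cref{hyp_pulled} the equality $\cNonLinMax[V] = \cLinMax$ holds, so that \cref{full_properties_ccc_minus_infty_and_ccc_0} (this time applied to $V$) gives $c\in\ccc_{-\infty}[V]$, and hence the existence of some $w_c$ in $H^1_c(\rr,\rr^d)$ with $\eee_{c,V}[w_c] < 0$. My plan is to use this same $w_c$ as a test function for the perturbed potential: in view of \cref{def_W_epsilon_delta}, one has
\[
\eee_{c,W_{\varepsilon,\delta}}[w_c] - \eee_{c,V}[w_c] = \int_\rr e^{c\xi}\, \chi\!\left(\frac{\norm{w_c(\xi)}-\varepsilon}{\delta}\right) q\bigl(w_c(\xi)\bigr)\, d\xi \ge 0,
\]
and the task reduces to showing that this nonnegative deviation tends to $0$ as $(\varepsilon,\delta)\to(0,0)$.

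This last step is a straightforward application of dominated convergence. The integrand is bounded pointwise, uniformly in $(\varepsilon,\delta)$, by $e^{c\xi}\, q(w_c(\xi)) = (\nu/2)\, e^{c\xi} \abs{w_c(\xi)}^2$, which belongs to $L^1(\rr)$ by membership of $w_c$ in $H^1_c(\rr,\rr^d)$. Moreover, at every $\xi$ such that $w_c(\xi)\ne 0_{\rr^d}$, the quantity $\norm{w_c(\xi)}$ is positive (by equivalence of norms on $\rr^d$), so the cutoff factor $\chi((\norm{w_c(\xi)}-\varepsilon)/\delta)$ vanishes as soon as $\varepsilon+\delta < \norm{w_c(\xi)}$; and at every $\xi$ with $w_c(\xi)=0_{\rr^d}$ the factor $q(w_c(\xi))$ is itself $0$. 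Hence the integrand tends pointwise to $0$, and so does its integral. It follows that $\eee_{c,W_{\varepsilon,\delta}}[w_c] < 0$ for $(\varepsilon,\delta)$ sufficiently small, whence $c \le \cNonLinMax[W_{\varepsilon,\delta}]$. Letting finally $c$ tend to $\cLinMax^-$ and combining with \cref{cNonLinMax_of_W_not_larger_than_cNonLinMax_of_V} then yields the limit \cref{nonlinear_invasion_speed_perturbed_potential}.

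In my view, the argument above is essentially a soft one, and I do not anticipate any significant technical obstacle: none of the more delicate features of the perturbation (the precise choice of $\nu$, the auxiliary norm $\norm{\cdot}$, or the ``second condition on $\nu$'' foreshadowed in the text) plays any role at this stage. All that is used is the fact that $W_{\varepsilon,\delta} - V$ is nonnegative and supported in a ball of radius $O(\varepsilon+\delta)$ around the origin, together with the property, inherited from hypothesis \cref{hyp_pulled}, that the energy of $V$ is already unbounded below at every speed below $\cLinMax$.
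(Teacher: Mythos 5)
Your argument is correct, and it follows the same overall strategy as the paper (fix $c$ in $(0,\cLinMax)$, exhibit a single test function whose energy for $W_{\varepsilon,\delta}$ is negative, invoke the dichotomy \cref{full_properties_ccc_minus_infty_and_ccc_0} to conclude $c<\cNonLinMax[W_{\varepsilon,\delta}]$, then combine with \cref{cNonLinMax_of_W_not_larger_than_cNonLinMax_of_V}). The difference lies in how the test function is produced and how the perturbation of its energy is controlled. The paper constructs an explicit profile $w$ (constant equal to $\mathfrak{u}_1$ on $(-\infty,0]$, then decaying like $e^{-\tilde c\xi/2}$ with $c<\tilde c<\cLinMax$), proves by a direct asymptotic computation that $\eee_{c,V}[w]<0$ for $\tilde c$ close to $c$ (limit \cref{lim_eee_of_c_V_as_tilde_c\_goes\_to\_c} should read \cref{lim_eee_of_c_V_as_tilde_c_goes_to_c}), and bounds the difference $\eee_{c,W_{\varepsilon,\delta}}[w]-\eee_{c,V}[w]$ by the explicit quantity $\frac{\nu}{2}\frac{1}{\tilde c-c}(\varepsilon+\delta)^{2(\tilde c-c)/\tilde c}$. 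You instead take the test function abstractly from the fact that $c\in\ccc_{-\infty}[V]$ (which follows already from \cref{cLinMax_less_or_equal_than_cNonLinMax}, so hypothesis \cref{hyp_pulled} is not even needed here), and replace the explicit estimate by dominated convergence, using that the deviation is dominated by $\frac{\nu}{2}e^{c\xi}\abs{w_c(\xi)}^2\in L^1(\rr)$ and vanishes pointwise since the cutoff factor dies as soon as $\varepsilon+\delta<\norm{w_c(\xi)}$. Both are complete proofs; your version is softer and shorter, while the paper's version is self-contained at this point (it does not presuppose knowing a negative-energy competitor for $V$) and yields a quantitative rate for the convergence \cref{nonlinear_invasion_speed_perturbed_potential}, though that rate is not exploited later. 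One cosmetic point: as the paper does just before the statement, you should record that $\varepsilon$ and $\delta$ are small enough for $0_{\rr^d}$ not to be a global minimum point of $W_{\varepsilon,\delta}$, so that the dichotomy \cref{full_properties_ccc_minus_infty_and_ccc_0} indeed applies to the perturbed potential.
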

\begin{proof}
Let us consider two quantities $c$ and $\tilde{c}$ satisfying the inequalities
\begin{equation}
\label{inequalities_c_and_tilde_c}
0 < c < \tilde{c} < \cLinMax
\,,
\end{equation}
and let us consider the function $w:\rr\to\rr^d$ defined as
\[
w(\xi) = \left\{
\begin{aligned}
\mathfrak{u}_1 \quad&\text{if}\quad \xi\le 0 \\
e^{-\frac{\tilde{c}}{2}\xi} \mathfrak{u}_1 \quad&\text{if}\quad \xi\ge 0
\end{aligned}
\right.
\,,
\]
see \cref{fig:w} (recall that $\mathfrak{u}_1$ denotes the first vector of the canonical basis of $\rr^d$, see \cref{canonical_basis}). 
\begin{figure}[htbp]
\centering
\includegraphics[width=\textwidth]{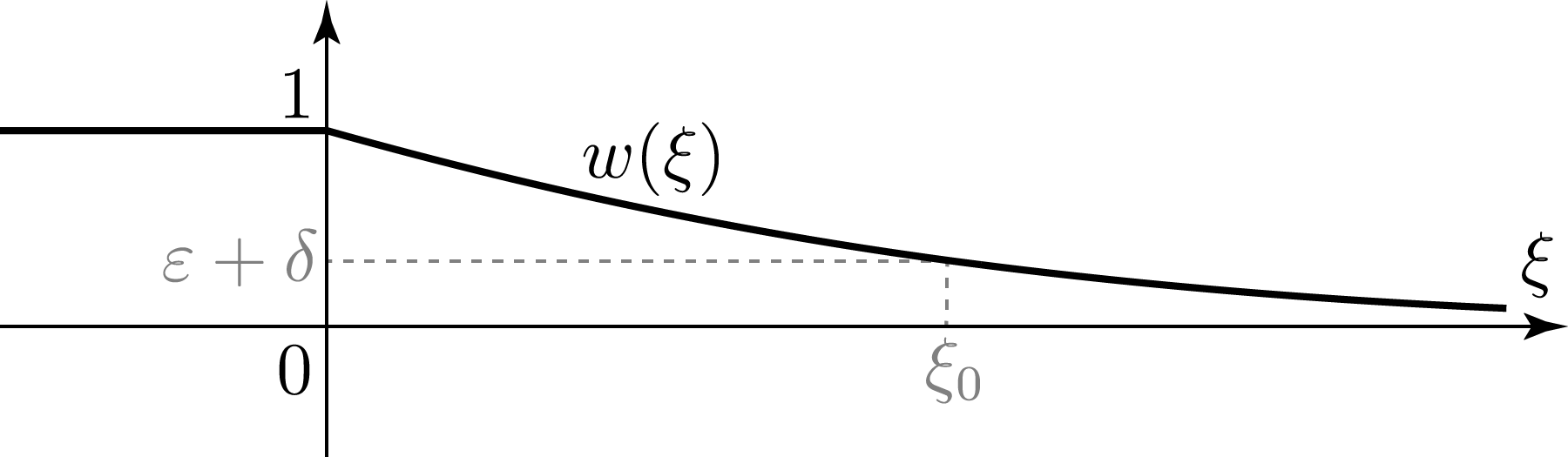}
\caption{Function $\xi\mapsto w(\xi)$.}
\label{fig:w}
\end{figure}
This function $w$ belongs to $H^1_c(\rr,\rr^d)$. In addition, for every positive quantity $\xi$, 
\[
\frac{1}{2} w'(\xi)^2 = \frac{1}{8} \tilde{c}^2 e^{-\tilde{c}\xi}
\,,
\]
and
\[
V\bigl(w(\xi)\bigr) \sim \frac{1}{2} \mu_1 e^{-\tilde{c}\xi} = - \frac{1}{8} \cLinMax^2 e^{-\tilde{c}\xi}
\quad\text{as}\quad
\xi\to+\infty
\,,
\]
so that
\[
e^{c\xi}\left(\frac{1}{2} w'(\xi)^2 + V\bigl(w(\xi)\bigr)\right) \sim - \frac{1}{8} \bigl(\cLinMax^2-\tilde{c}^2\bigr) e^{-(\tilde{c}-c)\xi}
\quad\text{as}\quad
\xi\to+\infty
\,.
\]
It follows that 
\begin{equation}
\label{lim_eee_of_c_V_as_tilde_c_goes_to_c}
\eee_{c,V}[w]\to -\infty 
\quad\text{as}\quad
\tilde{c}\to c 
\,, \quad 
\tilde{c}>c
\,.
\end{equation}
Now let $\varepsilon$ and $\delta$ denote two (small) positive quantities, and let us consider the quantity $\xi_0$ defined as
\[
\xi_0 = \frac{2}{\tilde{c}} \ln\frac{1}{\varepsilon+\delta}
\,,
\quad\text{so that}\quad
e^{-\frac{\tilde{c}}{2}\xi_0} = \varepsilon+\delta
\,,
\]
and let us assume that $\varepsilon$ and $\delta$ are small enough so that $\xi_0$ is positive, see \cref{fig:w}. Observe that, since $w(\xi)$ is proportional to the vector $\mathfrak{u}_1$, $\norm{w(\xi)}$ is equal to $\abs{w(\xi)}$. Thus, 
\[
\norm{w(\xi)} \le \varepsilon + \delta \iff \xi\ge\xi_0
\,,
\]
so that
\[
\begin{aligned}
W_{\varepsilon,\delta}\bigl(w(\xi)\bigr) &= V\bigl(w(\xi)\bigr) \quad\text{for}\quad \xi\le\xi_0\,, \\
\text{and}\quad
W_{\varepsilon,\delta}\bigl(w(\xi)\bigr) &\le V\bigl(w(\xi)\bigr) + q\bigl(w(\xi)\bigr) \quad\text{for}\quad \xi_0\le\xi
\,.
\end{aligned}
\]
It follows that 
\[
\begin{aligned}
0\le \eee_{c,W_{\varepsilon,\delta}}[w] - \eee_{c,V}[w] &= \int_{\rr} e^{c\xi} \Bigl(W_{\varepsilon,\delta}\bigl(w(\xi)\bigr) - V\bigl(w(\xi)\bigr)\Bigr) \, d\xi \\
&= \int_{\xi_0}^{+\infty} e^{c\xi} \Bigl(W_{\varepsilon,\delta}\bigl(w(\xi)\bigr) - V\bigl(w(\xi)\bigr)\Bigr) \, d\xi \\
&\le \int_{\xi_0}^{+\infty} e^{c\xi} q\bigl(w(\xi)\bigr)\, d\xi \\
&= \frac{\nu}{2} \int_{\xi_0}^{+\infty} e^{-(\tilde{c}-c)\xi}\, d\xi \\
&= \frac{\nu}{2} \frac{1}{\tilde{c}-c}(\varepsilon+\delta)^{2\frac{\tilde{c}-c}{\tilde{c}}}
\,.
\end{aligned}
\] 
As a consequence, for fixed $c$ and $\tilde{c}$ satisfying inequalities \cref{inequalities_c_and_tilde_c}, 
\begin{equation}
\label{lim_eee_of_c_W_minus_eee_of_c_V}
\eee_{c,W_{\varepsilon,\delta}}[w] - \eee_{c,V}[w] \to 0
\quad\text{as}\quad
(\varepsilon,\delta)\to(0,0)
\,.
\end{equation}
It follows from the limits \cref{lim_eee_of_c_V_as_tilde_c_goes_to_c,lim_eee_of_c_W_minus_eee_of_c_V} that, for every $c$ in the interval $(0,\cLinMax)$, there exists $\tilde{c}$ in the interval $(c,\cLinMax)$ such that, if the positive quantities $\varepsilon$ and $\delta$ are small enough, then 
\begin{equation}
\label{eee_c_W_of_w_negative}
\eee_{c,W_{\varepsilon,\delta}}[w] < 0
\,;
\end{equation}
and since $w$ belongs to $H^1_c(\rr,\rr^d)$, if this inequality \cref{eee_c_W_of_w_negative} holds, then, according to \cref{def_C_minus_infty_C_0,full_properties_ccc_minus_infty_and_ccc_0},
\[
\cNonLinMax[W_{\varepsilon,\delta}] > c 
\,.
\] 
In view of inequality \cref{cNonLinMax_of_W_not_larger_than_cNonLinMax_of_V}, this completes the proof. 
\end{proof}
\subsection{Pushed travelling fronts for the perturbed potentials}
\label{subsec:travelling_fronts_perturbed_potentials}
It follows from \cite[Theorem~2]{OliverBonafouxRisler_globCVPushedTravFronts_2023} that, if the positive quantities $\varepsilon$ and $\delta$ are small enough so that $\cNonLinMax[W_{\varepsilon,\delta}]$ is positive, then there exists a pushed front travelling at the speed $\cNonLinMax[W_{\varepsilon,\delta}]$ and invading the critical point $0_{\rr^d}$, for the potential $W_{\varepsilon,\delta}$. Let $\xi\mapsto\phi_{\varepsilon,\delta}(\xi)$ denote the profile of this travelling front. It is a bounded global solution of the differential system
\[
\phi'' = -\cNonLinMax[W_{\varepsilon,\delta}] \phi' + \nabla W_{\varepsilon,\delta}(\phi)
\]
satisfying the limit
\[
\phi_{\varepsilon,\delta}(\xi) \to 0_{\rr^d}
\quad\text{as}\quad
\xi\to+\infty
\,.
\]
The profile of the intended pushed or pulled front invading $0_{\rr^d}$ at the speed $\cLinMax$ for the potential $V$ will be obtained as a limit of the profiles $\phi_{\varepsilon,\delta}$, up to appropriate translations of their argument $\xi$ and for the topology of uniform convergence on compact subsets of $\rr$, for a sequence of parameters $(\varepsilon,\delta)$ going to $(0,0)$ (see \cref{subsubsec:obtaining_expected_front}). According to the limit \cref{nonlinear_invasion_speed_perturbed_potential}, such a limit profile must be a solution of the differential system \cref{syst_trav_front_order_2} governing the profiles of waves travelling at the speed $\cLinMax$ for the potential $V$. In order this limit profile to fulfil the conclusions of \cref{thm:main}, two additional features are required. 
\begin{enumerate}
\item There is no guarantee that the limit profile is not constant (for instance, the profiles $\phi_{\varepsilon,\delta}(\xi)$ might converge to $0_{\rr^d}$ as $(\varepsilon,\delta)$ goes to $(0,0)$, uniformly with respect to $\xi$ in $\rr$). Indeed, the critical point $0_{\rr^d}$ is not assumed to be non-degenerate, and is therefore not necessarily isolated as a critical point of $V$. 
\item Even if the limit profile is assumed to be nonconstant, and is the profile of a \emph{front} invading $0_{\rr^d}$ at the speed $\cLinMax$ in the sense of \cref{def:travelling_front_invading_0_speed_c}, there is no guarantee that this travelling front is either pushed or pulled (its profile might approach $0_{\rr^d}$, to the right end of space, at an exponential rate of convergence which is weaker than $\cLinMax/2$, a feature characterizing a ``mild'' travelling front, \cite{JolyOliverBRisler_genericTransvPulledPushedTFParabGradSyst_2023}).
\end{enumerate}
As will turn out from forthcoming arguments, obtaining a limit profile with these two additional features can be ensured by an appropriate choice of the parameters, namely:
\begin{itemize}
\item a ratio $\delta/\varepsilon$ going to $0$ as $(\varepsilon,\delta)\to(0,0)$, 
\item and a large enough (positive) parameter $\nu$.
\end{itemize}
Basically, it follows from such a choice that the profiles $\phi_{\varepsilon,\delta}(\xi)$ exit (as $\xi$ decreases from $+\infty$) the support of the perturbation $W_{\varepsilon,\delta}-V$ with a sufficient ``impulse'' to ensure that the two additional features above hold. Carrying out this analysis is the main purpose of the remaining steps of the proof. 
\subsection{Limit of an appropriate sequence of profiles}
\subsubsection{Sequence}
Let us consider two sequences $(\varepsilon_n)_{n\in\nn}$ and $(\delta_n)_{n\in\nn}$ of positive quantities, small enough so that $\cNonLinMax[W_{\varepsilon_n,\delta_n}]$ is positive for all $n$ in $\nn$, and satisfying
\begin{equation}
\label{properties_sequences_eps_n_delta_n}
\varepsilon_n\to 0 
\quad\text{and}\quad 
\delta_n\to 0
\quad\text{and}\quad 
\frac{\delta_n}{\varepsilon_n} \to 0
\quad\text{as}\quad 
n\to+\infty
\,.
\end{equation}
Let us consider the sequences $(W_n)_{n\in\nn}$ and $(c_n)_{n\in\nn}$ and $(\phi_n)_{n\in\nn}$ of potentials, invasion speeds, and profiles of pushed fronts, defined as
\[
W_n = W_{\varepsilon_n,\delta_n}
\quad\text{and}\quad
c_n = \cNonLinMax[W_n]
\quad\text{and}\quad
\phi_n = \phi_{\varepsilon_n,\delta_n}
\,.
\]
It follows from \cref{prop:nonlinear_invasion_speed_perturbed_potential} that 
\begin{equation}
\label{c_n_goes_to_cLinMax}
c_n \to \cLinMax
\quad\text{as}\quad
n\to+\infty
\,.
\end{equation}
\subsubsection{Space variable reversal}
Let us consider the function $\zeta\mapsto \psi_n(\zeta)$, defined as
\begin{equation}
\label{def_psi_n}
\psi_n(\zeta) = \phi_n(\xi) 
\quad\text{for}\quad
\zeta = -\xi
\,, 
\end{equation}
so that $\psi_n$ is a global solution of the differential system
\begin{equation}
\label{diff_syst_psi_n}
\psi'' = c_n \psi' + \nabla W_n(\psi)
\,,
\end{equation}
for which the positive quantity $c_n$ represents a \emph{negative} damping coefficient, and $\psi_n(\zeta)$ goes to $0_{\rr^d}$ as $\zeta$ goes to $-\infty$. The quantities $\lambdaPert_{j,\pm}$ introduced in \cref{def_lambda_pert} are the eigenvalues of the linearization at $0_{\rr^d}$ of this differential system; according to inequalities \cref{signs_of_lambda_left_pm} (those eigenvalues are real and nonzero), the image of $(\psi_n,\psi_n')$ must belong to the unstable manifold of $0_{\rr^{2d}}$ for the first order differential system derived from \cref{diff_syst_psi_n}, and there must exist a (unique) real quantity $\zetaEps_n$ such that
\begin{equation}
\label{properties_psi_n_left_of_zero}
\norm{\psi_n(\zetaEps_n)} = \varepsilon_n \,,
\quad\text{and, for every $\zeta$ in $(-\infty,\zetaEps_n)$,}\quad
\norm{\psi_n(\zeta)} < \varepsilon_n
\,,
\end{equation}
see \cref{fig:xi_zeta_line}.
\subsubsection{Renormalization and limit of renormalized profiles and potentials}
Let us consider, for $n$ in $\nn$, the ``renormalized'' profiles $\tilde{\psi}_n$ and potentials $\tilde{W}_n$ defined as
\begin{align}
\label{def_tilde_psi_n}
\tilde{\psi}_n(\zeta) &= \frac{1}{\varepsilon_n} \psi_n(\zetaEps_n + \zeta)\,, \\
\nonumber
\text{and}\quad
\tilde{W}_n(u) &= \frac{1}{\varepsilon_n^2} W_n(\varepsilon_n u) = \chi\left(\frac{\norm{u}-1}{\delta_n/\varepsilon_n}\right)q(u) + \frac{1}{\varepsilon_n^2} V(\varepsilon_n u)
\,.
\end{align}
According to the definition \cref{def_tilde_psi_n}, it follows from the properties \cref{properties_psi_n_left_of_zero} that
\begin{equation}
\label{properties_tilde_psi_n_left_of_zero}
\norm{\tilde{\psi}_n(0)} = 1 \,,
\quad\text{and, for every $\zeta$ in $(-\infty,0)$,}\quad
\norm{\tilde{\psi}_n(\zeta)} < 1
\,;
\end{equation}
in addition, dividing the differential system \cref{diff_syst_psi_n} by $\varepsilon_n$, it follows that the function $\tilde{\psi}_n$ is a (global, bounded) solution of the differential system
\[
\tilde{\psi}_n'' = c_n \tilde{\psi}_n' + \nabla \tilde{W}_n(\tilde{\psi}_n)
\,.
\]
Let us consider the potential function $\tilde{W}_{\infty}:\rr^d\to\rr$, defined as
\[
\tilde{W}_{\infty}(u) = \left\{
\begin{aligned}
\frac{1}{2} D^2V(0_{\rr^d})\cdot u \cdot u + q(u) = \frac{1}{2} \left(\nu u^2 + \sum_{j=1}^d \mu_j u_j^2\right) \quad&\text{if}\quad \norm{u}\le 1 \,, \\
\frac{1}{2} D^2V(0_{\rr^d})\cdot u \cdot u = \frac{1}{2} \sum_{j=1}^d \mu_j u_j^2 \quad&\text{if}\quad \norm{u}> 1 
\,,
\end{aligned}
\right.
\]
which presents a discontinuity of amplitude $\frac{1}{2}\nu u^2$ located on the set $\{u\in\rr^d:\norm{u} = 1\}$, see \cref{fig:graph_tilde_W_infty}. 
\begin{figure}[htbp]
\centering
\begin{subfigure}{.52\textwidth}
\includegraphics[width=\textwidth]{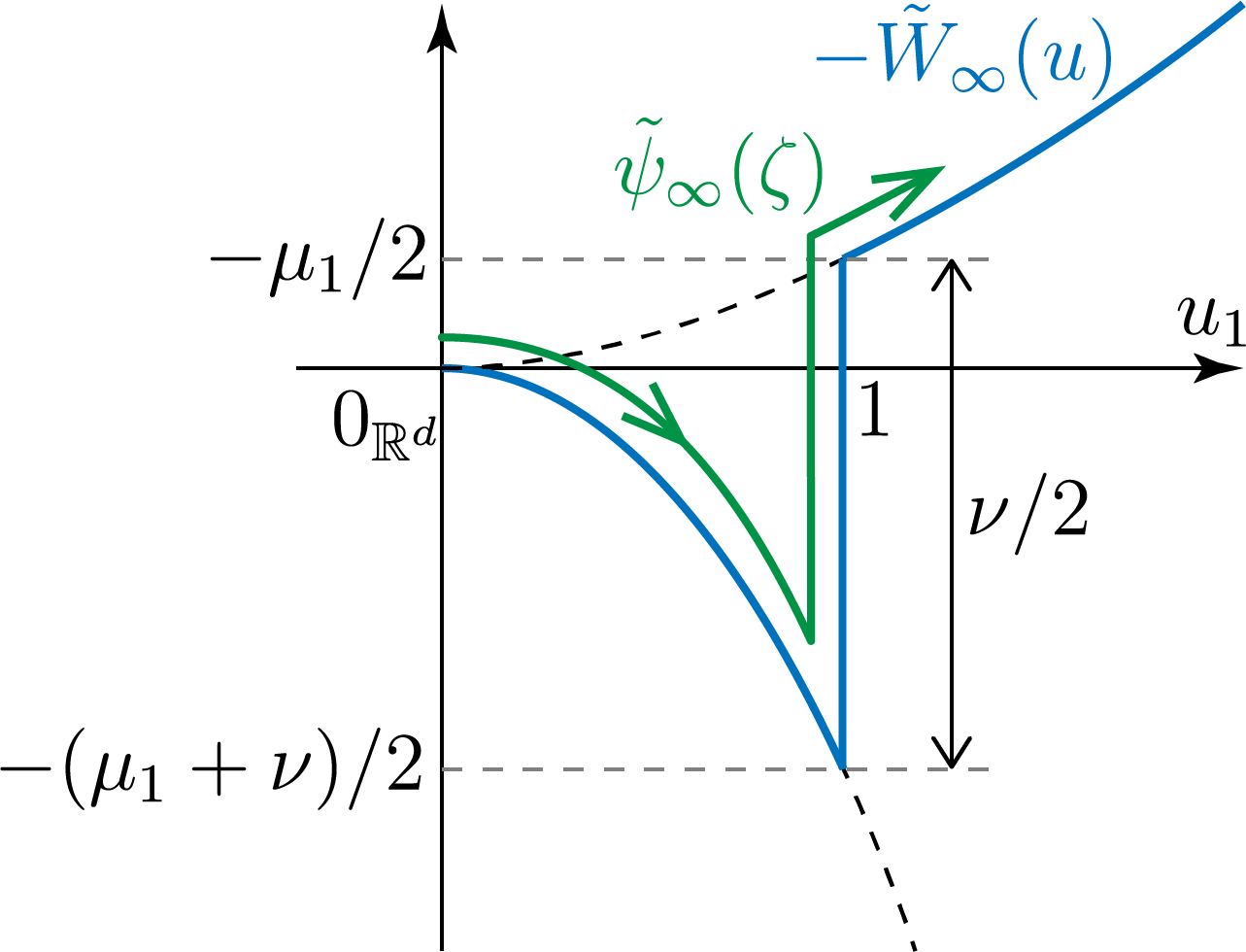}
\caption{Graph of $-\tilde{W}_{\infty}$ (minus the limit of the renormalized potentials) and behaviour of $\zeta\mapsto\tilde{\psi}_\infty(\zeta)$. As on \cref{fig:W_epsilon_delta}, the horizontal axis represents the coordinate $u_1$; along this axis the norm $\norm{\cdot}$ is equal to the standard Euclidean norm $\abs{\cdot}$. The larger the quantity $\nu$, the larger the contribution of the negative damping $\cLinMax$ to the ``impulse'' of $\tilde{\psi}_\infty$ when it reaches the potential barrier at $\norm{u}$ equals $1$.}
\label{fig:graph_tilde_W_infty}
\end{subfigure}
\hspace{.02\textwidth}
\begin{subfigure}{.44\textwidth}
\includegraphics[width=\textwidth]{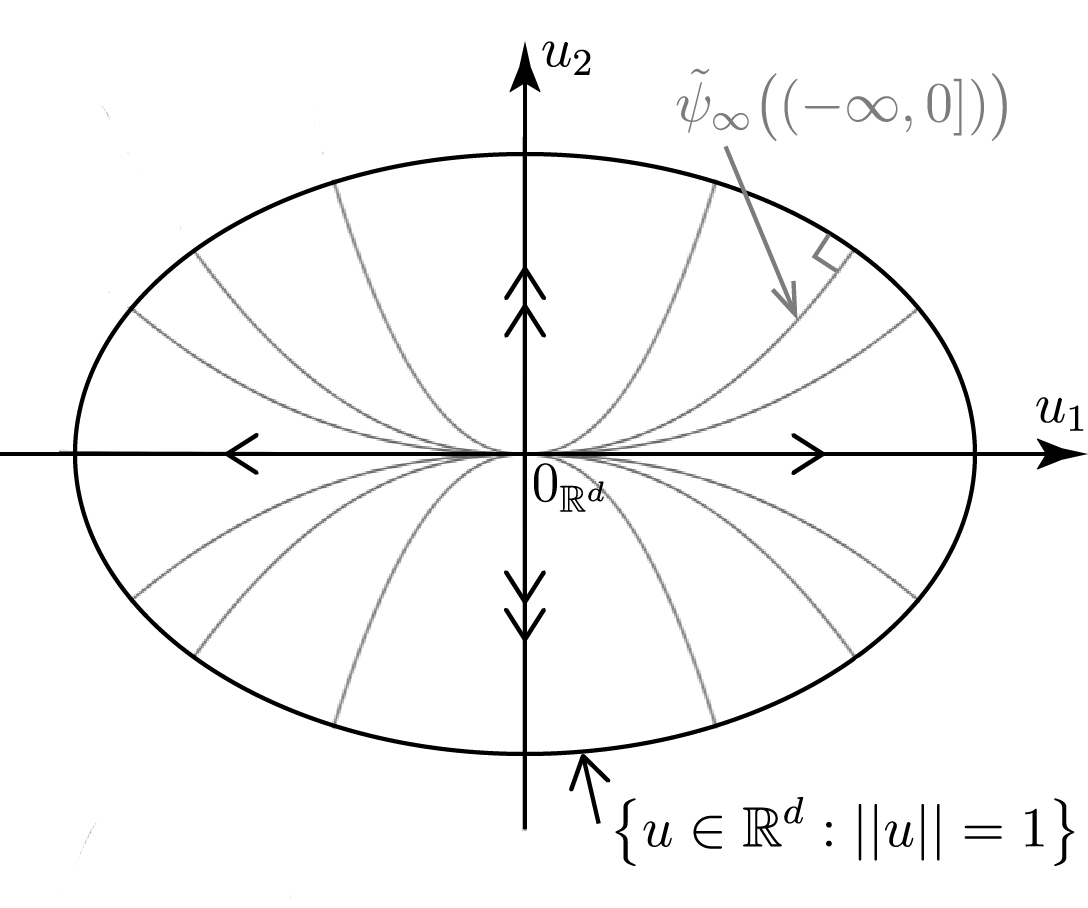}
\caption{Ellipsoid $\bigl\{u\in\rr^d:\norm{u}=1\bigr\}$ in $\rr^d$ and trajectories inside this ellipsoid of solutions in the unstable manifold of $0_{\rr^{2d}}$ for the differential system \cref{limit_renormalized_differential_system}. At the points where these trajectories reach the ellipsoid, the velocity and the tangent plane to the ellipsoid are orthogonal.}
\label{fig:trajectories}
\end{subfigure}
\caption{}
\end{figure}
According to the last property of \cref{properties_sequences_eps_n_delta_n}, the potentials $\tilde{W}_n$ converge towards $\tilde{W}_{\infty}$ as $n$ goes to $+\infty$, uniformly on $\{u\in\rr^d:\norm{u}\le 1\}$, and uniformly on compact subsets of $\{u\in\rr^d:\norm{u}>1\}$. By compactness, up to replacing the sequence $(\varepsilon_n,\delta_n)_{n\in\nn}$ by a subsequence, it may be assumed that there exists a function $\rr\to\rr^d$, $\zeta\mapsto\tilde{\psi}_\infty(\zeta)$, such that:
\begin{equation}
\label{compactness_tilde_psi_n}
\tilde{\psi}_n \to \tilde{\psi}_\infty 
\quad\text{as}\quad
n\to+\infty
\,,
\end{equation}
uniformly on every compact subset of $\rr$. In addition, it follows from the properties \cref{properties_tilde_psi_n_left_of_zero} that
\begin{equation}
\label{basic_propertis_tilde_psi_infty}
\norm{\tilde{\psi}_\infty(0)} = 1 \,,
\quad\text{and, for every $\zeta$ in $(-\infty,0)$,}\quad
\norm{\tilde{\psi}_\infty(\zeta)} \le 1
\,.
\end{equation}
The function $\tilde{\psi}_\infty$ is continuous on $\rr$, and is a solution of the differential system
\begin{equation}
\label{limit_renormalized_differential_system}
\psi'' = \cLinMax \psi' + \nabla \tilde{W}_{\infty}(\psi)
\,,\quad\text{on}\quad
\left\{\zeta\in\rr:\norm{\tilde{\psi}_\infty(\zeta)}\not=1\right\}
\,,
\end{equation}
see \cref{fig:trajectories}. Solutions of this differential system are continuous, but their derivative (speed) is discontinuous at the times where they meet the ellipsoid $\bigl\{u\in\rr^d:\norm{u}=1\bigr\}$ where the potential $\tilde{W}_{\infty}$ is discontinuous. At such times, the component of the velocity which is tangent to the tangent plane to the ellipsoid is unchanged, while the perpendicular component is either reversed (rebound) or increased/decreased to balance for the jump in potential energy induced by the crossing of this potential barrier. In addition, solutions which are in the unstable manifold of $0_{\rr^{2d}}$ for this system (including $\tilde{\psi}_\infty$) reach this ellipsoid with a perpendicular velocity (indeed, by construction, the gradient of the norm $\norm{\cdot}$ is parallel to the derivative of such solutions in the interior of this ellipsoid); see \cref{fig:trajectories}. 
\subsubsection{Asymptotics of the limit of renormalized profiles}
\label{subsubsec:crossing_potential_barrier}
For $j$ in $\{1,\dots,d\}$, let us consider the quantities $\lambda_{j,\pm}$ defined as
\[
\lambda_{j,\pm} = \frac{\cLinMax}{2} \pm \sqrt{\frac{\cLinMax^2}{4}+\mu_j}
\,,
\]
which are the analogues of the quantities $\lambdaPert_{j,\pm}$ defined in \cref{def_lambda_pert}, but for $D^2V(0_{\rr^d})$ which equals $\diag(\mu_1,\dots,\mu_d)$ (equality \cref{hessian_potential_V}), rather than for $D^2V(0_{\rr^d}) +\nu I_d$ which equals $\diag(\mu_1+\nu,\dots,\mu_d+\nu)$. The aim of this \namecref{subsubsec:crossing_potential_barrier} is to prove the following lemma. 
\begin{lemma}[asymptotics of the limit of renormalized profiles]
\label{lem:asymptotics_limit_renormalized_profiles}
If the quantity $\nu$ is large enough (the precise condition is provided by the condition \cref{second_condition_on_nu} below), then the following conclusions hold:
\begin{align}
\label{norm_of_tilde_psi_infty_remains_larger_than_1}
\text{for every $\zeta$ in $(0,+\infty)$,} \quad &\norm{\tilde{\psi}_\infty(\zeta)} > 1 
\,, \\
\label{tilde_psi_infty_goes_to_infty}
\text{and}\quad &\abs{\tilde{\psi}_\infty(\zeta)} \to+\infty
\quad\text{as}\quad \zeta\to+\infty 
\,,
\end{align}
and, for every $j$ in $\{1,\dots,d\}$ such that $\tilde{\psi}_{\infty,j}(0)$ is non zero,
\begin{equation}
\label{limit_of_tilde_psi_injty_j_prime_over_not_prime}
\frac{\tilde{\psi}_{\infty,j}'(\zeta)}{\tilde{\psi}_{\infty,j}(\zeta)} \to \lambda_{j,+}
\quad\text{as}\quad \zeta\to+\infty 
\,.
\end{equation}
\end{lemma}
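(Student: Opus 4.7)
The plan is to analyse $\tilde{\psi}_\infty$ separately on the ``interior'' interval $(-\infty,0]$ and on the ``exterior'' interval $[0,+\infty)$, glued by an energy-type matching condition at $\zeta=0$, and then to exploit a convenient change of variable to control the exterior dynamics and derive all three asymptotic conclusions simultaneously.

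On $(-\infty,0]$, since $\tilde{W}_n(u) = q(u) + \varepsilon_n^{-2}V(\varepsilon_n u)$ converges uniformly on $\{u\in\rr^d:\norm{u}\le 1\}$ to the quadratic map $u\mapsto\frac{1}{2}(D^2V(0_{\rr^d})+\nu I_d)u\cdot u$, the limit $\tilde{\psi}_\infty$ is a solution of the linear system $\psi'' = \cLinMax\psi' + (D^2V(0_{\rr^d})+\nu I_d)\psi$; and since $\tilde{\psi}_\infty(\zeta)\to 0_{\rr^d}$ as $\zeta\to-\infty$, it must belong to the unstable manifold of this linear system, which yields the explicit representation
\[
\tilde{\psi}_\infty(\zeta) = \sum_{j=1}^d \alpha_j\, e^{\lambdaPert_{j,+}\zeta}\,\mathfrak{u}_j
\quad\text{for}\quad \zeta\le 0
\,,
\]
with real coefficients $\alpha_1,\dots,\alpha_d$ satisfying $\sum_{j=1}^d\lambdaPert_{j,+}\alpha_j^2 = \lambdaPert_{1,+}$ (equivalent to $\norm{u^*}=1$, where $u^* := \tilde{\psi}_\infty(0)$). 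The left-limit of the derivative is $\tilde{\psi}_\infty'(0^-) = \sum_j\lambdaPert_{j,+}\alpha_j\mathfrak{u}_j$, proportional to the gradient of $\norm{\cdot}^2$ at $u^*$, hence normal to the ellipsoid $\{\norm{u}=1\}$. I would then justify the matching condition at $\zeta=0$ by passing to the limit, as $n\to+\infty$, in the energy identity $\frac{d}{d\zeta}(\frac{1}{2}|\tilde{\psi}_n'|^2 - \tilde{W}_n(\tilde{\psi}_n)) = c_n|\tilde{\psi}_n'|^2$ integrated over the boundary layer of vanishing width $\delta_n/\varepsilon_n$; this yields $|\tilde{\psi}_\infty'(0^+)|^2 = |\tilde{\psi}_\infty'(0^-)|^2 - \nu|u^*|^2$, with $\tilde{\psi}_\infty'(0^+)$ outward-pointing and parallel to $\tilde{\psi}_\infty'(0^-)$. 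Using the identity $(\lambdaPert_{j,+})^2 - \nu = \cLinMax\lambdaPert_{j,+}+\mu_j$ (immediate from the definition of $\lambdaPert_{j,+}$), I would choose $\nu$ large enough so that $\cLinMax\lambdaPert_{j,+}+\mu_j > 0$ for every $j\in\{1,\dots,d\}$ (this is the intended ``second condition on $\nu$''); then $|\tilde{\psi}_\infty'(0^+)|^2 > 0$ and $\tilde{\psi}_\infty'(0^+) = r\,\tilde{\psi}_\infty'(0^-)$ for some $r\in(0,1)$.

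For $\zeta>0$, $\tilde{\psi}_\infty$ satisfies the autonomous linear system $\psi'' = \cLinMax\psi' + D^2V(0_{\rr^d})\psi$. The key technical trick is then the change of variable $\tilde{\psi}_\infty(\zeta) = e^{(\cLinMax/2)\zeta} Y(\zeta)$, which together with the identity $\mu_1 = -\cLinMax^2/4$ transforms this equation into $Y'' = MY$, where $M := D^2V(0_{\rr^d}) + \frac{\cLinMax^2}{4} I_d = \diag(0,\mu_2-\mu_1,\dots,\mu_d-\mu_1)$ is positive semi-definite. Consequently $\norm{Y(\zeta)}^2$ is a convex function of $\zeta$, since
\[
\frac{d^2}{d\zeta^2}\norm{Y}^2 = \frac{2}{\lambdaPert_{1,+}}\sum_{j=1}^d\lambdaPert_{j,+}\bigl[(Y_j')^2 + (\mu_j-\mu_1)Y_j^2\bigr] \geq 0
\,.
\]
At $\zeta=0$, $\norm{Y(0)}^2 = 1$ and $\frac{d}{d\zeta}\norm{Y}^2(0^+) = \frac{2}{\lambdaPert_{1,+}}\sum_j\lambdaPert_{j,+}(r\lambdaPert_{j,+}-\frac{\cLinMax}{2})\alpha_j^2$, which is strictly positive for $\nu$ large enough, since the asymptotics $\lambdaPert_{j,+}\sim\sqrt{\nu}$ and $r^2\sim\cLinMax/\sqrt{\nu}$ yield $r\lambdaPert_{j,+}\sim\cLinMax^{1/2}\nu^{1/4}\to+\infty$. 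Convexity then implies $\norm{Y(\zeta)}^2 > 1$ for every $\zeta > 0$, hence $\norm{\tilde{\psi}_\infty(\zeta)}^2 = e^{\cLinMax\zeta}\norm{Y(\zeta)}^2 > 1$ and tends to $+\infty$, establishing \cref{norm_of_tilde_psi_infty_remains_larger_than_1,tilde_psi_infty_goes_to_infty}.

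For conclusion \cref{limit_of_tilde_psi_injty_j_prime_over_not_prime}, I would solve for each $Y_j$ explicitly. For $j=1$, $Y_1''=0$ yields $Y_1(\zeta) = \alpha_1[1 + (r\lambdaPert_{1,+}-\frac{\cLinMax}{2})\zeta]$; hence $\tilde{\psi}_{\infty,1}(\zeta)$ is asymptotic to $\alpha_1(r\lambdaPert_{1,+}-\frac{\cLinMax}{2})\zeta e^{(\cLinMax/2)\zeta}$ and the ratio tends to $\cLinMax/2=\lambda_{1,+}$. For $j\ge 2$ with $\mu_j>\mu_1$, $Y_j(\zeta) = A_j' e^{\omega_j\zeta} + B_j' e^{-\omega_j\zeta}$ with $\omega_j := \sqrt{\mu_j-\mu_1}$, so that $\tilde{\psi}_{\infty,j}(\zeta) = A_j'e^{\lambda_{j,+}\zeta} + B_j'e^{\lambda_{j,-}\zeta}$; the matching conditions give $A_j'$ proportional to $r\lambdaPert_{j,+}-\lambda_{j,-}$, which is strictly positive for $\nu$ large, so the $\lambda_{j,+}$-mode dominates and the ratio converges to $\lambda_{j,+}$. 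The case $\mu_j=\mu_1$ with $j\ge 2$ is analogous to the $j=1$ case. The main technical obstacle I anticipate is the rigorous derivation of the velocity jump condition at $\zeta = 0$ from the smooth approximants $\tilde{\psi}_n$: one must show that the large but concentrated impulse delivered by $\nabla\tilde{W}_n$ in the thin transition annulus of width $\delta_n/\varepsilon_n$ converges, in the limit $n\to+\infty$, to the prescribed jump in the normal direction, while the tangential component of the velocity passes unchanged to the limit --- and it is precisely the scaling $\delta_n/\varepsilon_n\to 0$ from \cref{properties_sequences_eps_n_delta_n} that ensures this.
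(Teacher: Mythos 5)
Your overall architecture coincides with the paper's: the explicit exponential representation of $\tilde{\psi}_\infty$ on $(-\infty,0]$ from the perturbed linear system, normal incidence on the ellipsoid, an energy-balance jump condition at the barrier, and a linear analysis of the exterior dynamics showing that the post-crossing logarithmic derivative of each nonzero component exceeds $\cLinMax/2$. Where you genuinely depart from the paper is in the exterior analysis: the paper argues component by component on the phase portraits of $\psi''=\cLinMax\psi'+\mu_j\psi$ (the condition $\tilde{\psi}_{\infty,j}'(0^+)/\tilde{\psi}_{\infty,j}(0^+)>\max(\lambda_{j,-},0)$ places each orbit in an invariant region where $\abs{\tilde{\psi}_{\infty,j}}$ increases), whereas your substitution $\tilde{\psi}_\infty=e^{\cLinMax\zeta/2}Y$, reducing to $Y''=MY$ with $M$ positive semi-definite, followed by convexity of $\norm{Y}^2$ and explicit solution formulas, is a clean quantitative alternative that delivers all three conclusions at once.

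Two points need tightening. First, a circularity: you may not assume that $\tilde{\psi}_\infty$ solves the unperturbed linear system on all of $(0,+\infty)$, since this is only known on $\{\zeta:\norm{\tilde{\psi}_\infty(\zeta)}\neq1\}$ and a priori the orbit could return to the ball $\norm{u}\le1$, where the potential carries the extra term $q$. You must introduce, as the paper does, the first return time $\zetaBack=\inf\{\zeta>0:\norm{\tilde{\psi}_\infty(\zeta)}=1\}$, run the convexity argument on $(0,\zetaBack)$, and note that $\norm{\tilde{\psi}_\infty(\zeta)}^2=e^{\cLinMax\zeta}\norm{Y(\zeta)}^2>1$ there contradicts finiteness of $\zetaBack$; an easy fix, but not optional. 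Second, and more substantively, your ``second condition on $\nu$'' is not the right one: the inequality $\cLinMax\lambdaPert_{j,+}+\mu_j>0$ already follows from \cref{first_condition_on_nu} together with $\mu_j\ge\mu_1=-\cLinMax^2/4$ (indeed $\cLinMax\lambdaPert_{j,+}>\cLinMax^2/2$), so it imposes nothing and only guarantees that the barrier is crossed ($r>0$). What your argument actually needs is $r\lambdaPert_{j,+}>\cLinMax/2$ for every $j$, which you justify only by the asymptotics $r\sim\cLinMax^{1/2}\nu^{-1/4}$ with the coefficients $\alpha_j$ treated as fixed; but the $\alpha_j$ depend on $\nu$ through the whole construction (the perturbation, the pushed fronts $\phi_{\varepsilon,\delta}$, and the limit $\tilde{\psi}_\infty$ all change with $\nu$), so this heuristic does not by itself produce an admissible choice of $\nu$. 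You need a bound uniform in the $\alpha_j$: from $(\lambdaPert_{j,+})^2-\nu=\cLinMax\lambdaPert_{j,+}+\mu_j\ge\tfrac{3}{4}\cLinMax^2$ one gets $r^2\ge 3\cLinMax^2/\bigl(4(\lambdaPert_{d,+})^2\bigr)$ independently of the $\alpha_j$, hence $r\lambdaPert_{j,+}\ge\tfrac{\sqrt{3}}{2}\cLinMax\,\lambdaPert_{1,+}/\lambdaPert_{d,+}$, and the correct condition is \cref{second_condition_on_nu}, namely $\lambdaPert_{1,+}/\lambdaPert_{d,+}>1/\sqrt{3}$, which holds for $\nu$ large since this ratio tends to $1$. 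With that substitution your exterior analysis goes through verbatim. As for the jump condition you flag as the main obstacle: the paper does not derive it rigorously from the approximants either (it is built into the description of the limiting discontinuous system), so your sketch is at a comparable level of rigor.
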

\begin{proof}
The main step is to show that, provided that the quantity $\nu$ is large enough, $\tilde{\psi}_\infty$ crosses the potential barrier it is facing at $\zeta$ equals $0$ (rather than ``rebounding'' on it) and keeps after this crossing a large enough kinetic energy to ensure a further growth at an exponential rate which is not smaller than $\cLinMax/2$ (inequality \cref{lower_bound_psi_prime_over_psi_prelim} below).

It follows from the second property of \cref{basic_propertis_tilde_psi_infty} that $\tilde{\psi}_\infty$ is (according to the limit \cref{c_n_goes_to_cLinMax}) a solution on $(-\infty,0)$ of the (linear, diagonal) differential system
\begin{equation}
\label{diff_syst_linearized_increased_potential}
\tilde{\psi}_\infty'' = \cLinMax \tilde{\psi}_\infty' + \bigl(D^2 V(0_{\rr^d}) + \nu I_d\bigr)\cdot \tilde{\psi}_\infty
\,,
\end{equation}
which is nothing but system \cref{limit_renormalized_differential_system} in the domain $\bigl\{u\in\rr^d:\norm{u}<1\bigr\}$. Let 
\[
\bigl(\tilde{\psi}_{\infty,1},\dots,\tilde{\psi}_{\infty,d}\bigr)
\]
denote the components of $\tilde{\psi}_\infty$ in $\rr^d$. For every $j$ in $\{1,\dots,d\}$, the real-valued function $\tilde{\psi}_{\infty,j}$ is a solution, on $(-\infty,0)$, of the linear differential equation
\begin{equation}
\label{linear_by_components_renormalization_at_varepsilon}
\psi'' = \cLinMax \psi' + (\mu_j +\nu)\psi 
\,,
\end{equation}
associated with the eigenvalues $\lambdaPert_{j,\pm}$ defined in \cref{def_lambda_pert}. It follows from inequalities \cref{signs_of_lambda_left_pm} and from the properties \cref{properties_tilde_psi_n_left_of_zero} that, for every $\zeta$ in $(-\infty,0]$, 
\[
\tilde{\psi}_{\infty,j}(\zeta) = \tilde{\psi}_{\infty,j}(0) e^{\lambdaPert_{j,+}\zeta}
\,.
\]
In particular, 
\begin{equation}
\label{tilde_psi_infty_prime_of_zero_minus}
\tilde{\psi}_{\infty,j}'(0^-) = \lambdaPert_{j,+} \tilde{\psi}_{\infty,j}(0)
\,.
\end{equation}
According to the first property of \cref{basic_propertis_tilde_psi_infty}, the quantity $\norm{\tilde{\psi}_\infty(0)}$ must be equal to $1$. It follows that there exists at least one integer $j$ in $\{1,\dots,d\}$ such that the function $\tilde{\psi}_{\infty,j}$ is not identically equal to $0$. For such an integer $j$, the quantity $\tilde{\psi}_{\infty,j}(0)$ is therefore also nonzero, and the kinetic energy of the solution $\tilde{\psi}_{\infty,j}(\zeta)$ at $\zeta$ equals $0^-$ is equal to 
\[
\frac{1}{2}(\lambdaPert_{j,+})^2\tilde{\psi}_{\infty,j}(0)^2
\,,
\]
so that the kinetic energy of the solution $\tilde{\psi}_{\infty}(\zeta)$ (of the differential system \cref{diff_syst_linearized_increased_potential}) at $\zeta$ equals $0^-$ is equal to 
\begin{equation}
\label{kinetic_energy}
\frac{1}{2} \sum_{j=1}^d (\lambdaPert_{j,+})^2\tilde{\psi}_{\infty,j}(0)^2
\,,
\end{equation}
while the ``potential barrier'' faced by the same solution $\tilde{\psi}_{\infty}(\zeta)$, still at $\zeta$ equals $0^-$, is equal to
\begin{equation}
\label{potential_barrier}
\frac{1}{2} \nu \tilde{\psi}_{\infty}(0)^2 = \frac{1}{2} \sum_{j=1}^d \nu \tilde{\psi}_{\infty,j}(0)^2
\,.
\end{equation}
Up to the nonnegative factor $\tilde{\psi}_{\infty,j}(0)^2$, the difference between the $j$-th term of the sum in \cref{kinetic_energy} and the $j$-th term of the sum in \cref{potential_barrier} is equal (according to the expression \cref{def_lambda_pert} of $\lambdaPert_{j,+}$) to
\[
(\lambdaPert_{j,+})^2 - \nu = \frac{\cLinMax^2}{4} + \cLinMax\sqrt{\frac{\cLinMax^2}{4}+\mu_j + \nu} + \left(\frac{\cLinMax^2}{4} + \mu_j\right)
\,.
\]
According to the expression \cref{expression_cLinMax} of the quantity $\cLinMax$ and to the condition \cref{first_condition_on_nu} on $\nu$, both quantities $\cLinMax^2/4+\mu_j$ and $\mu_j + \nu$ are nonnegative (and the second one is positive). It follows that
\[
(\lambdaPert_{j,+})^2 - \nu \ge \frac{3}{4}\cLinMax^2 
\,.
\]
As a consequence, the difference between the kinetic energy \cref{kinetic_energy} and the potential barrier \cref{potential_barrier} satisfies the inequality
\begin{equation}
\label{balance_kinetic_energy_potential_barrier}
\begin{aligned}
\frac{1}{2} \sum_{j=1}^d (\lambdaPert_{j,+})^2\tilde{\psi}_{\infty,j}(0)^2 - \frac{1}{2} \sum_{j=1}^d \nu \tilde{\psi}_{\infty,j}(0)^2 
&> \frac{3}{8}\cLinMax^2 \sum_{j=1}^d \tilde{\psi}_{\infty,j}(0)^2  \\
&= \frac{3}{8}\cLinMax^2 \tilde{\psi}_{\infty}(0)^2
\,,
\end{aligned}
\end{equation}
and is in particular positive. Now the key point is that, according to the definition \cref{def_norm_alt} of the norm $\norm{\cdot}$, the velocity $\tilde{\psi}_{\infty}'(0^-)$ is perpendicular to the level set of $\norm{\cdot}$ at $\tilde{\psi}_{\infty}(0)$, and is therefore perpendicular to the potential barrier at $\zeta$ equals $0$. As a consequence, due to the positivity of the difference \cref{balance_kinetic_energy_potential_barrier}, the solution crosses this potential barrier and this crossing results in a discontinuity of the velocity but a continuity of its orientation and direction. In other words, the velocity $\tilde{\psi}_{\infty}'(0^+)$, after crossing this potential barrier is proportional to the velocity $\tilde{\psi}_{\infty}'(0^-)$ before this crossing, with a proportionality coefficient (denoted by $\gamma$) which is in $(0,1)$; with symbols, 
\begin{equation}
\label{tilde_psi_infty_prime_of_zero_plus}
\begin{aligned}
\tilde{\psi}_{\infty}'(0^+) &= \gamma \tilde{\psi}_{\infty}'(0^-) \,, \\
\text{or in other words, for every $j$ in $\{1,\dots,d\}$,}\quad
\tilde{\psi}_{\infty,j}'(0^+) &= \gamma \tilde{\psi}_{\infty,j}'(0^-)
\,.
\end{aligned}
\end{equation}
It follows that the kinetic energy at $\zeta$ equals $0^+$ reads
\[
\frac{1}{2}\tilde{\psi}_{\infty}'(0^+)^2 = \frac{1}{2} \gamma^2 \sum_{j=1}^d (\lambdaPert_{j,+})^2\tilde{\psi}_{\infty,j}(0)^2
\,.
\]
Since this kinetic energy is equal to the left hand term of inequality \cref{balance_kinetic_energy_potential_barrier}, it follows that
\begin{equation}
\label{lower_bound_kinetic_energy}
\frac{1}{2} \gamma^2 \sum_{j=1}^d (\lambdaPert_{j,+})^2\tilde{\psi}_{\infty,j}(0)^2 > \frac{3}{8}\cLinMax^2 \tilde{\psi}_{\infty}(0)^2
\,.
\end{equation}
Now, according to inequalities \cref{signs_of_lambda_left_pm},
\[
\tilde{\psi}_{\infty}(0)^2 = \sum_{j=1}^d \tilde{\psi}_{\infty,j}(0)^2 \ge (\lambdaPert_{d,+})^{-2} \sum_{j=1}^d (\lambdaPert_{j,+})^2\tilde{\psi}_{\infty,j}(0)^2
\,.
\]
and it follows from inequality \cref{lower_bound_kinetic_energy} that
\[
\frac{1}{2} \gamma^2 (\lambdaPert_{d,+})^2 \tilde{\psi}_\infty(0)^2 > \frac{3}{8}\cLinMax^2 \tilde{\psi}_{\infty}(0)^2 \,,
\quad\text{or equivalently,}\quad
\gamma  > \frac{1}{\lambdaPert_{d,+}} \frac{\sqrt{3}}{2} \cLinMax
\,,
\]
and as a consequence, for every $j$ in $\{1,\dots,d\}$, 
\begin{equation}
\label{lower_bound_gamma}
\gamma \lambdaPert_{j,+} > \frac{\lambdaPert_{j,+}}{\lambdaPert_{d,+}} \frac{\sqrt{3}}{2} \cLinMax \ge \frac{\lambdaPert_{1,+}}{\lambdaPert_{d,+}} \frac{\sqrt{3}}{2} \cLinMax
\,.
\end{equation}
Observe that
\[
\frac{\lambdaPert_{1,+}}{\lambdaPert_{d,+}} \to 1 
\quad\text{as}\quad
\nu\to+\infty
\,,
\]
so that, provided that the positive quantity $\nu$ is chosen large enough, 
\begin{equation}
\label{second_condition_on_nu}
\frac{\lambdaPert_{1,+}}{\lambdaPert_{d,+}} > \frac{1}{\sqrt{3}}
\,.
\end{equation}
When this condition is satisfied, it follows from inequalities \cref{lower_bound_gamma} that, for every $j$ in $\{1,\dots,d\}$, 
\[
\gamma \lambdaPert_{j,+} > \frac{1}{2} \cLinMax
\,,
\]
and it therefore follows from equalities \cref{tilde_psi_infty_prime_of_zero_minus,tilde_psi_infty_prime_of_zero_plus} that, for every $j$ in $\{1,\dots,d\}$ such that $\tilde{\psi}_{\infty,j}(0)$ is nonzero, 
\begin{equation}
\label{lower_bound_psi_prime_over_psi_prelim}
\frac{\tilde{\psi}_{\infty,j}'(0^+)}{\tilde{\psi}_{\infty,j}(0^+)} > \frac{1}{2} \cLinMax 
\,.
\end{equation}
Let us consider the quantity $\zetaBack$ defined as
\[
\zetaBack = \inf\left\{\zeta\in(0,+\infty): \norm{\tilde{\psi}_{\infty}(\zeta)} = 1 \right\}
\,,
\]
with the convention that the infimum of an empty subset of $\rr$ is equal to $+\infty$; thus $\zetaBack$ is in $(0,+\infty)\cup\{+\infty\}$. On the interval $(0,\zetaBack)$, $\tilde{\psi}_{\infty}$ is a solution of the linear differential system
\begin{equation}
\label{linear_differential_system_unperturbed}
\psi'' = \cLinMax \psi' + D^2 V(0_{\rr^d}) \psi
\,,
\end{equation}
and, for every $j$ in $\{1,\dots,d\}$, the $j$-th component $\tilde{\psi}_{\infty,j}$ of $\tilde{\psi}_{\infty}$ is a solution of the linear differential equation 
\begin{equation}
\label{linear_differential_equation_unperturbed}
\psi'' = \cLinMax \psi' + \mu_j \psi
\,,
\end{equation}
the eigenvalues of which are:
\begin{equation}
\label{lambda_j_plus_minus}
\lambda_{j,\pm} = \frac{\cLinMax}{2} \pm \sqrt{\frac{\cLinMax^2}{4}+\mu_j}
\,,\quad j\in\{1,\dots,d\}
\,.
\end{equation}
It thus follows from inequality \cref{lower_bound_psi_prime_over_psi_prelim} that, for every $j$ in $\{1,\dots,d\}$ such that $\tilde{\psi}_{\infty,j}(0)$ is nonzero, 
\begin{equation}
\label{lower_bound_psi_prime_over_psi}
\frac{\tilde{\psi}_{\infty,j}'(0^+)}{\tilde{\psi}_{\infty,j}(0^+)} > \max\bigl(\lambda_{j,-},0\bigr)
\,.
\end{equation}
As can be seen on the phase portraits of the linear differential equations \cref{linear_differential_equation_unperturbed} displayed on \cref{fig:phase_portraits}, this inequality \cref{lower_bound_psi_prime_over_psi} ensures that, for every $j$ in $\{1,\dots,d\}$ such that $\tilde{\psi}_{\infty,j}(0)$ is nonzero, the function $\abs{\tilde{\psi}_{\infty,j}}$ is increasing on the whole interval $(0,\zetaBack)$; this shows that the quantity $\zetaBack$ is actually equal to $+\infty$, and proves conclusion \cref{norm_of_tilde_psi_infty_remains_larger_than_1}. In addition, again in view of these phases portraits, the intended inequality \cref{limit_of_tilde_psi_injty_j_prime_over_not_prime} follows, and as a consequence the remaining property \cref{tilde_psi_infty_goes_to_infty} holds. All the conclusions of \cref{lem:asymptotics_limit_renormalized_profiles} are proved. 
\begin{figure}[htbp]
\centering
\includegraphics[width=\textwidth]{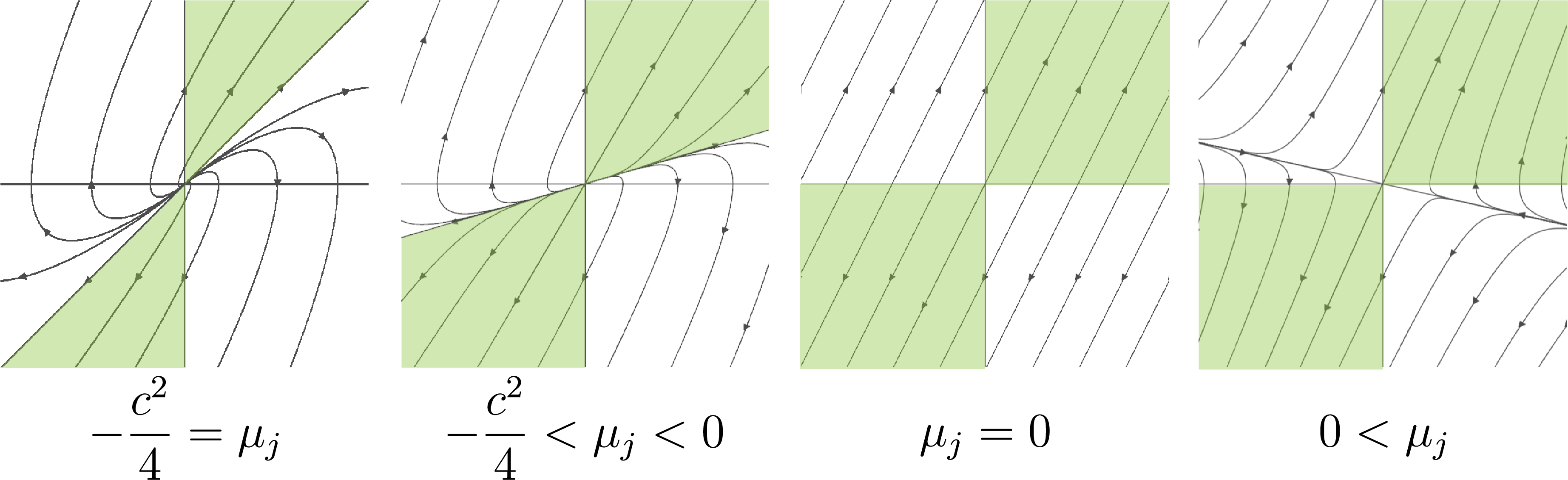}
\caption{Phase portrait of the linear differential equation \cref{linear_differential_equation_unperturbed}, in the various cases defined by the sign and value of the eigenvalue $\mu_j$. The green areas are the ones for which the inequality \cref{lower_bound_psi_prime_over_psi} is satisfied. In each case $c$ is equal to $2$ and $\mu_j$ takes, from left to right, the values: $-1$, $-1/2$, $0$, $1/2$. The slopes of the two eigenlines are $\lambda_{j,-}$ (which decreases from left to right) and $\lambda_{j,+}$ (which increases from left to right). Credits to \url{https://mathlets.org}!}
\label{fig:phase_portraits}
\end{figure}
\end{proof}
\subsubsection{Positively invariant cone around the half-speed unstable subspace}
Let $j_0$ denotes the largest integer in $\{1,\dots,d\}$ such that $\mu_{j_0}=\mu_1$ (generically $j_0$ equals $1$). According to their expression \cref{lambda_j_plus_minus}, the eigenvalues of the linear differential system \cref{linear_differential_system_unperturbed} are ordered as follows: 
\[
\begin{aligned}
&\lambda_{d,-} \le \cdots \le \lambda_{j_0+1,-} < \lambda_{j_0,-} = \cdots = \lambda_{1,-} = \frac{\cLinMax}{2} \,, \\
\text{and}\quad
&\lambda_{d,+} \ge \cdots \ge \lambda_{j_0+1,+} > \lambda_{j_0,+} = \cdots = \lambda_{1,+} = \frac{\cLinMax}{2} 
\,.
\end{aligned}
\]
Let us denote:
\begin{itemize}
\item by $\eeemu$ the ``mild'' unstable subspace of $\rr^{2d}$, which is the sum of all eigenspaces associated with eigenvalues that are less than $\cLinMax/2$,
\item and by $\eeehsu$ the ``half speed'' unstable subspace of $\rr^{2d}$, which is the sum of all eigenspaces (and generalized eigenspaces) associated with eigenvalues that are greater than or equal to $\cLinMax/2$, 
\end{itemize}
for the linear differential system \cref{linear_differential_system_unperturbed}. Then, the subspace $\eeemu$ is spanned by the vectors
\[
\begin{pmatrix} \mathfrak{u}_d \\  \lambda_{d,-}\mathfrak{u}_d \end{pmatrix}
,\dots,
\begin{pmatrix} \mathfrak{u}_{j_0+1} \\  \lambda_{j_0+1,-}\mathfrak{u}_{j_0+1} \end{pmatrix},
\,,
\]
and $\eeehsu$ is spanned by the vectors
\[
\begin{pmatrix} \mathfrak{u}_1 \\ \frac{\cLinMax}{2} \mathfrak{u}_1 \end{pmatrix},
\begin{pmatrix} 0 \\  \mathfrak{u}_1 \end{pmatrix},
\dots,
\begin{pmatrix} \mathfrak{u}_{j_0} \\ \frac{\cLinMax}{2} \mathfrak{u}_{j_0} \end{pmatrix},
\begin{pmatrix} 0 \\  \mathfrak{u}_{j_0} \end{pmatrix},
\begin{pmatrix} \mathfrak{u}_{j_0+1} \\  \lambda_{j_0+1,+}\mathfrak{u}_{j_0+1} \end{pmatrix},
\dots,
\begin{pmatrix} \mathfrak{u}_d \\  \lambda_{d,+}\mathfrak{u}_d \end{pmatrix}
\,;
\]
in particular, 
\[
\dim\eeehsu = d + j_0 
\quad\text{and}\quad
\dim\eeemu = d - j_0 
\quad\text{and}\quad
\eeehsu \oplus \eeemu = \rr^{2d}
\,.
\]
Let $\alpha$ denote a positive quantity, small enough so that
\begin{equation}
\label{gap_between_mu_and_hsu}
\lambda_{j_0+1,-} < \frac{\cLinMax}{2} -\alpha
\end{equation}
and
\begin{equation}
\label{gap_between_0_and_hsu}
0 < \frac{\cLinMax}{2} - 3\alpha
\,,
\end{equation}
and let us consider the $2d\times2d$-matrix
\[
A = \begin{pmatrix}
0 & I_d \\
D^2V(0_{\rr^d}) & \cLinMax
\end{pmatrix}
\]
associated with the linear differential system \cref{linear_differential_system_unperturbed}. According to the definitions of the subspaces $\eeehsu$ and $\eeemu$, there exist a norm $\norm{\cdot}_\hsu$ on $\eeehsu$ and a norm $\norm{\cdot}_\mutext$ on $\eeemu$ such that, for every positive quantity $\zeta$, the following properties hold:
\begin{equation}
\label{inequalities_adapted_norms}
\norm{\exp\bigl(\zeta A_{|\eeemu}\bigr)}_\mutext \le e^{\lambda_{j_0+1,-}\zeta}
\,, \quad\text{and}\quad 
\norm{\exp\bigl(-\zeta A_{|\eeehsu}\bigr)}_\hsu \le e^{-\left(\frac{\cLinMax}{2}-\alpha\right)\zeta} 
\end{equation}
(the ``loss'' $\alpha$ to the right hand side of the second inequality cannot be avoided, due to the existence of at least one Jordan block). For every $U$ in $\rr^{2d}$ let us denote by $U_\hsu$ and $U_\mutext$ the components of $U$ along the supplementary subspaces $\eeehsu$ and $\eeemu$; with symbols, 
\[
U = U_\hsu + U_\mutext 
\quad\text{with}\quad 
U_\hsu \in \eeehsu
\quad\text{and}\quad 
U_\mutext \in \eeemu
\,,
\]
see \cref{fig:invariant_cone}.
\begin{figure}[htbp]
\centering
\includegraphics[width=.4\textwidth]{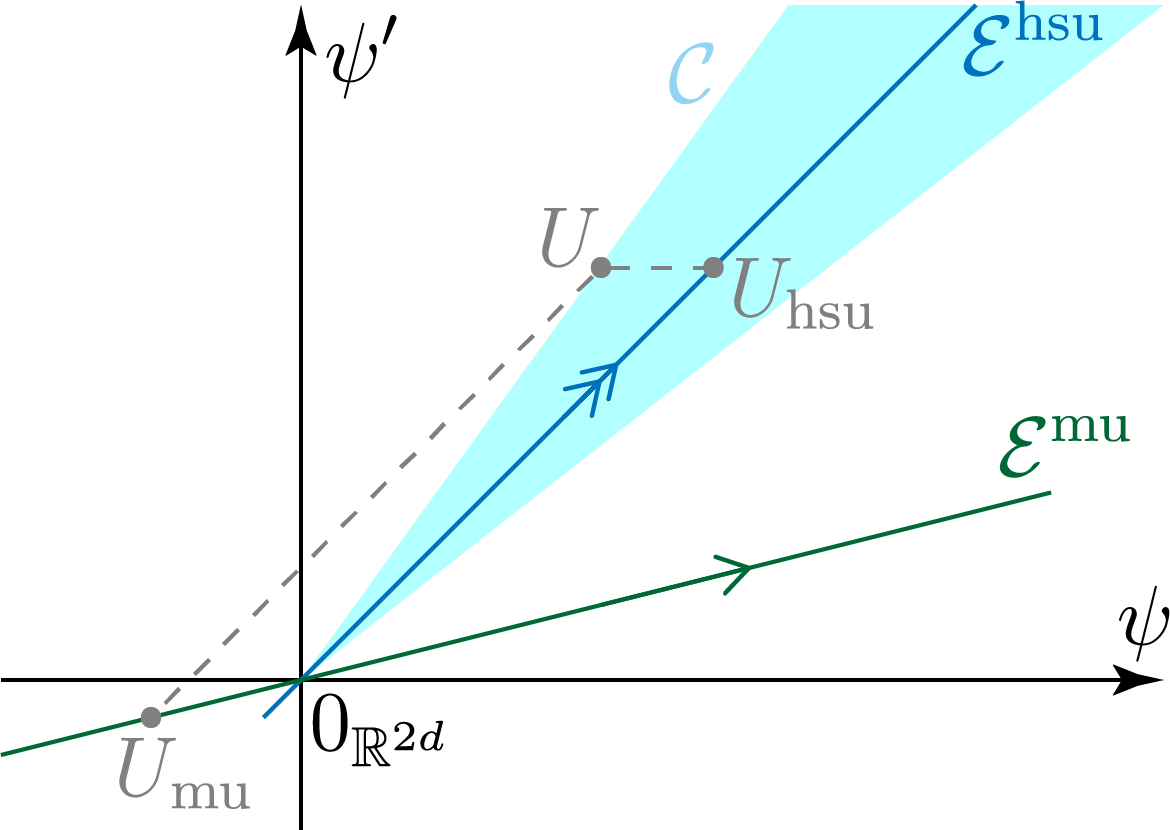}
\caption{Complement subspaces $\eeemu$ and $\eeehsu$ and cone $\ccc$, which is positively invariant under the flow of the linear differential system \cref{linear_differential_system_unperturbed}.}
\label{fig:invariant_cone}
\end{figure}
Let $\beta$ denote a small positive quantity to be chosen below, and let us denote by $\ccc$ the cone around $\eeehsu$ in $\rr^{2d}$, defined as
\begin{equation}
\label{def_ccc}
\ccc = \bigl\{U\in\rr^{2d}: \beta\norm{U_\hsu}_\hsu \ge \norm{U_\mutext}_\mutext\bigr\}
\,, 
\end{equation}
and let $\interior(\ccc)$ denote the interior of $\ccc$. It follows from inequalities \cref{gap_between_mu_and_hsu,inequalities_adapted_norms} that this cone $\ccc$ is positively invariant under the linear flow defined by the matrix $A$ (that, is, by the linear differential system \cref{linear_differential_system_unperturbed}). More precisely, this positive invariance is strict in the sense that, for every $U$ in $\ccc\setminus\{0_{\rr^{2d}}\}$ and for every positive quantity $\zeta$, 
\begin{equation}
\label{cone_invariance_linear}
e^{\zeta A}U \in \interior(\ccc)
\,.
\end{equation}
In addition, if we consider the norm $\norm{\cdot}_{\hsu+\mutext}$ on $\rr^{2d}$ defined as
\[
\norm{U}_{\hsu+\mutext} = \norm{U_\hsu}_\hsu + \norm{U_\mutext}_\mutext
\,,
\]
then, if the positive quantity $\beta$ is small enough, for every $U$ in $\ccc$ and for every positive quantity $\zeta$, 
\begin{equation}
\label{linear_increase_inside_cone}
\norm{e^{\zeta A}U}_{\hsu+\mutext} \ge e^{\left(\frac{\cLinMax}{2} - 2\alpha\right)\zeta}\norm{U}_{\hsu+\mutext}
\,.
\end{equation}
For every nonnegative integer $n$, let $(S^\zeta_n)_\zeta$ denote the flow in $\rr^{2d}$ of the nonlinear system
\begin{equation}
\label{differential_system_psi_cn_V}
\psi'' = c_n \psi' + \nabla V(\psi)
\,.
\end{equation}
\begin{lemma}[robustness of cone invariance properties]
\label{lem:robustness_cone_invariance_properties}
There exists a (small) positive quantity $\rMacro$ such that, for every sufficiently large integer $n$, for every $U$ in $\ccc\setminus\{0_{\rr^{2d}}\}$, and for every positive quantity $\zeta_0$, if the following condition is fulfilled:
\[
\text{for every $\zeta$ in $[0,\zeta_0]$, $S^\zeta_n U$ is defined and belongs to $\widebar{B}_{\rr^{2d}}\bigl(0_{\rr^{2d}},\rMacro\bigr)_{\hsu+\mutext}$,}
\]
then the following conclusions hold: for every $\zeta$ in $[0,\zeta_0]$,
\begin{equation}
\label{properties_nonlinear_flow_inside_cone}
S^\zeta_n U \in\ccc 
\quad\text{and}\quad
\norm{S^\zeta_n U}_{\hsu+\mutext} \ge e^{\left(\frac{\cLinMax}{2} - 3\alpha\right)\zeta} \norm{U}_{\hsu+\mutext}
\,.
\end{equation}
\end{lemma}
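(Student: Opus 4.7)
The plan is to treat the nonlinear flow $(S_n^\zeta)_\zeta$ as a small perturbation of the linear flow $(e^{\zeta A})_\zeta$ of system \cref{linear_differential_system_unperturbed}, and to transfer to it the strict cone invariance \cref{cone_invariance_linear} and the expansion estimate \cref{linear_increase_inside_cone}. The first step is to rewrite the differential system \cref{differential_system_psi_cn_V} in first-order form for $U=(\psi,\psi')$ as $U'=AU+G_n(U)$, with perturbation
\[
G_n(U) = \begin{pmatrix} 0_{\rr^d} \\ (c_n - \cLinMax)\,\psi' + \nabla V(\psi) - D^2V(0_{\rr^d})\cdot\psi \end{pmatrix}.
\]
Since $V$ is of class $\ccc^2$ and $c_n\to\cLinMax$ (limit \cref{c_n_goes_to_cLinMax}), for every positive $\eta$ one can find $\rMacro>0$ and $N\in\nn$ such that $\norm{G_n(U)}_{\hsu+\mutext} \le \eta\,\norm{U}_{\hsu+\mutext}$ whenever $n\ge N$ and $\norm{U}_{\hsu+\mutext}\le\rMacro$.

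Next, fixing a trajectory $\zeta\mapsto U(\zeta)=S_n^\zeta U$ that stays inside the ball of radius $\rMacro$ on $[0,\zeta_0]$, I would decompose $U(\zeta)=U_\hsu(\zeta)+U_\mutext(\zeta)$ along $\eeehsu\oplus\eeemu$ and apply Duhamel's formula to each component. Combined with the linear estimates \cref{inequalities_adapted_norms}, inverted on $\eeehsu$ to produce an expansion lower bound, this should yield integral inequalities of the form
\begin{align*}
\norm{U_\hsu(\zeta)}_\hsu &\ge e^{(\cLinMax/2 - \alpha)\zeta}\,\norm{U_\hsu(0)}_\hsu - \eta K \int_0^\zeta e^{(\cLinMax/2 - \alpha)(\zeta-s)}\,\norm{U(s)}_{\hsu+\mutext}\,ds, \\
\norm{U_\mutext(\zeta)}_\mutext &\le e^{\lambda_{j_0+1,-}\zeta}\,\norm{U_\mutext(0)}_\mutext + \eta K \int_0^\zeta e^{\lambda_{j_0+1,-}(\zeta-s)}\,\norm{U(s)}_{\hsu+\mutext}\,ds,
\end{align*}
where $K$ absorbs the operator norms of the projections onto $\eeehsu$ and $\eeemu$.

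Finally, thanks to the positive spectral gap $(\cLinMax/2 - \alpha) - \lambda_{j_0+1,-}>0$ guaranteed by \cref{gap_between_mu_and_hsu}, I would choose $\eta$ small enough (equivalently, $\rMacro$ small and $N$ large) so that a bootstrap/Grönwall argument closes. Concretely, monitoring the indicator $\Phi(\zeta)=\beta\,\norm{U_\hsu(\zeta)}_\hsu - \norm{U_\mutext(\zeta)}_\mutext$, the two integral inequalities above would show that as long as $\Phi\ge 0$ it cannot reach zero from above, so cone invariance holds throughout $[0,\zeta_0]$; feeding this cone information back into the lower bound on $\norm{U_\hsu}_\hsu$ and combining with \cref{linear_increase_inside_cone} would yield $\norm{U(\zeta)}_{\hsu+\mutext} \ge e^{(\cLinMax/2 - 3\alpha)\zeta}\,\norm{U(0)}_{\hsu+\mutext}$, the extra $\alpha$-loss precisely absorbing the perturbation $G_n$.

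The hard part will be executing a \emph{single} bootstrap that simultaneously delivers both conclusions \cref{properties_nonlinear_flow_inside_cone}: one has to pin down how small $\eta$ must be in terms of $\alpha$, $\beta$ and the spectral gap so that neither cone invariance nor the exponential growth is spoiled. This is somewhat delicate because the adapted norms $\norm{\cdot}_\hsu$ and $\norm{\cdot}_\mutext$ only provide one-sided estimates on $e^{\zeta A}$, and inverting them on $\eeehsu$ to produce a lower bound on expansion is where the extra $\alpha$-loss in the growth rate fundamentally originates.
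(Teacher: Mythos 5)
Your proposal is correct and follows essentially the same route as the paper, whose own proof of this lemma consists of just two sentences asserting that the strict invariance property \cref{cone_invariance_linear} and the expansion estimate \cref{linear_increase_inside_cone} are ``robust under a small perturbation'' (with the loss from $2\alpha$ to $3\alpha$ absorbing that perturbation). Your Duhamel/Gr\"onwall bootstrap, with the perturbation term $G_n$ made Lipschitz-small on $\widebar{B}_{\rr^{2d}}\bigl(0_{\rr^{2d}},\rMacro\bigr)$ for $n$ large via $c_n\to\cLinMax$ and the $\ccc^2$ regularity of $V$, is exactly the standard implementation of that robustness claim, so you have simply supplied the details the paper chose to omit.
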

In the notation $\rMacro$, the index ``macro'' refers to the fact that this radius will be used to ensure the ``macroscopic'' size of the profiles $\psi_n$, as $n$ goes to $+\infty$. 
\begin{proof}
The strict invariance property \cref{cone_invariance_linear} is robust under a small perturbation, and so is the increase property \cref{linear_increase_inside_cone} (up to replacing $2\alpha$ by $3\alpha$ to the right-hand side of this inequality). The conclusions follow. 
\end{proof}
\subsubsection{Weakly nonlinear behaviour of initial profiles}
Let us pick a positive quantity $\rPert$, large enough so that, for all $u$ and $v$ in $\rr^d$,
\begin{equation}
\label{def_rPert}
\text{if}\quad
(u,v)\in\ccc
\quad\text{and}\quad
\rPert \le \norm{(u,v)}_{\hsu+\mutext} \,,
\quad\text{then}\quad
2 \le \norm{u}
\,;
\end{equation}
this radius will be used to ensure that solutions under consideration are outside of the support of the difference between the initial potential $V$ and the perturbed potentials $W_n$ (thus the index ``pert'' in this notation). It follows from the definition \cref{def_ccc} of $\ccc$ and from the limit \cref{limit_of_tilde_psi_injty_j_prime_over_not_prime} that there exists a positive quantity $\zetaCone$, large enough so that
\begin{equation}
\label{tilde_psi_infty_of_zetaCone_in_C}
\bigl(\tilde{\psi}_{\infty}(\zetaCone),\tilde{\psi}_{\infty}'(\zetaCone)\bigr) \in \interior(\ccc)
\,,
\end{equation}
see \cref{fig:xi_zeta_line}. According to the positive invariance of $\ccc$, $\bigl(\tilde{\psi}_{\infty}(\zeta),\tilde{\psi}_{\infty}'(\zeta)\bigr)$ is still in $\ccc$ for every $\zeta$ greater than $\zetaCone$, and according to the linear increase ensured by inequality \cref{linear_increase_inside_cone}, it may be assumed, up to replacing $\zetaCone$ by a larger positive quantity, that
\begin{equation}
\label{tilde_psi_infty_of_zetaCone_greater_than_rPert}
\rPert < \norm{\bigl(\tilde{\psi}_{\infty}(\zetaCone),\tilde{\psi}_{\infty}'(\zetaCone)\bigr)}_{\hsu+\mutext}
\,.
\end{equation}
It follows from \cref{tilde_psi_infty_of_zetaCone_in_C,tilde_psi_infty_of_zetaCone_greater_than_rPert} that, for every large enough positive integer $n$, 
\begin{equation}
\label{tilde_psi_n_of_zetaCone_in_interior_of_C}
\bigl(\tilde{\psi}_n(\zetaCone),\tilde{\psi}_n'(\zetaCone)\bigr) \in \interior(\ccc)
\quad\text{and}\quad
\rPert < \norm{\bigl(\tilde{\psi}_n(\zetaCone),\tilde{\psi}_n'(\zetaCone)\bigr)}_{\hsu+\mutext}
\,.
\end{equation}
Let us consider again the sequence $(\psi_n)_{n\in\nn}$ of profiles of travelling fronts introduced in \cref{def_psi_n}, more exactly the subsequence defined by the limit \cref{compactness_tilde_psi_n}, and, for every nonnegative integer $n$, let us consider the function $\Psi_n:\rr\to\rr^{2d}$ defined as
\[
\Psi_n(\zeta) = \bigl(\psi_n(\zeta),\psi_n'(\zeta)\bigr)
\,.
\]
\begin{figure}[htbp]
\centering
\includegraphics[width=\textwidth]{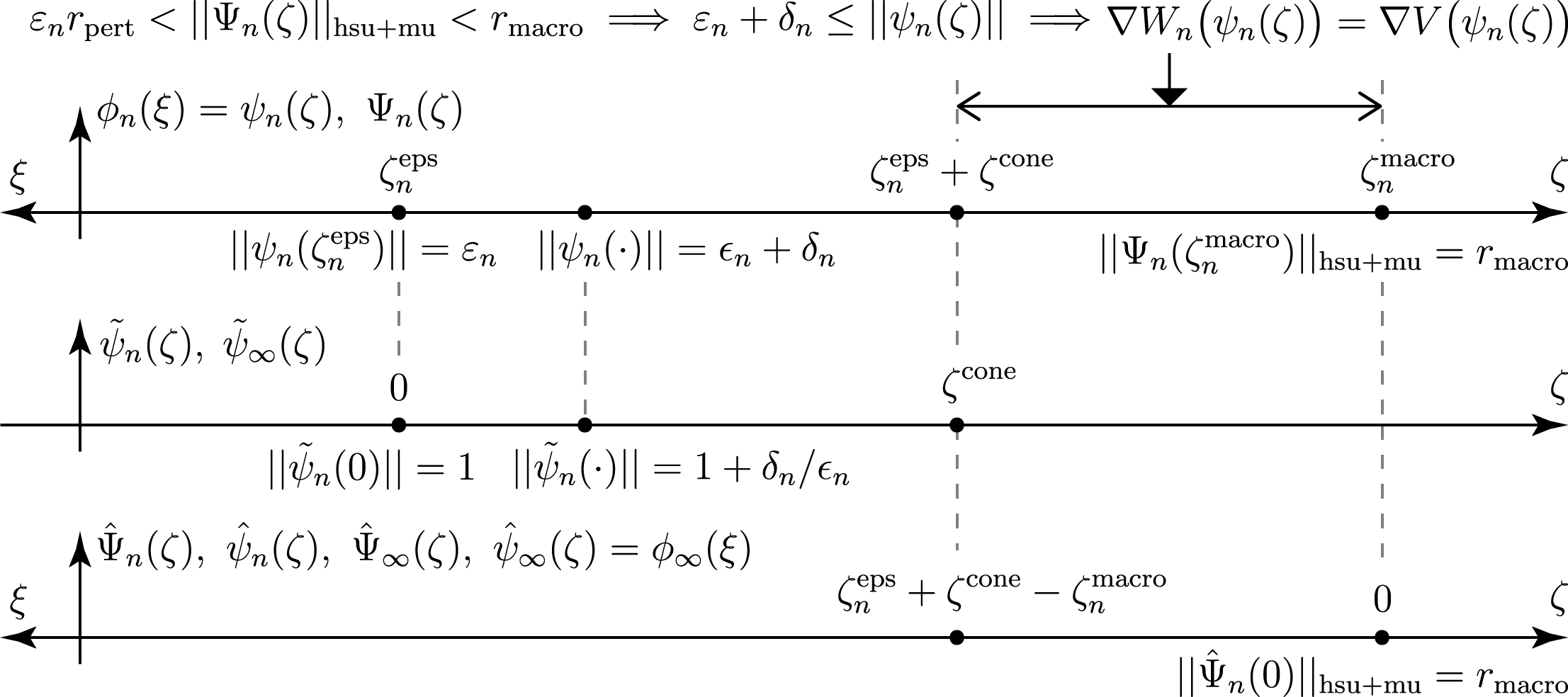}
\caption{Changes of variables (reflections and translations) on the arguments $\xi$ and $\zeta$ of the profiles, notation for the profiles, specific values of the arguments used in the proof, and properties satisfied by the norms of the profiles at these specific values.}
\label{fig:xi_zeta_line}
\end{figure}
\begin{lemma}[weakly nonlinear behaviour of initial profiles]
For every large enough positive integer $n$, there exists a real quantity $\zetaMacro_n$ (see \cref{fig:xi_zeta_line}), greater than $\zetaEps_n+\zetaCone$, such that
\begin{equation}
\label{properties_Psi_n}
\norm{\Psi_n(\zetaMacro_n)}_{\hsu+\mutext} = \rMacro 
\quad
\text{and, for every $\zeta$ in $[\zetaEps_n+\zetaCone,\zetaMacro_n]$,} \quad \Psi_n(\zeta)\in \ccc
\,.
\end{equation}
In addition, $\zetaMacro_n - (\zetaEps_n+\zetaCone) \to +\infty$ as $n\to+\infty$. 
\end{lemma}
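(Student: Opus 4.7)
\medskip

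\noindent\textbf{Proof plan for the lemma.}

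The natural strategy is to \emph{define} $\zetaMacro_n$ as the first hitting time, for the trajectory $\zeta\mapsto\Psi_n(\zeta)$, of either the sphere $\{\norm{\cdot}_{\hsu+\mutext}=\rMacro\}$ or the boundary of $\ccc$, starting from $\zeta=\zetaEps_n+\zetaCone$. Before running the argument, one checks that the initial data have the correct features for large $n$. Using the renormalization identity $\psi_n(\zetaEps_n+\zeta)=\varepsilon_n\tilde{\psi}_n(\zeta)$, one gets
\[
\Psi_n(\zetaEps_n+\zetaCone)=\varepsilon_n\bigl(\tilde{\psi}_n(\zetaCone),\tilde{\psi}_n'(\zetaCone)\bigr)\,,
\]
so that, by \cref{tilde_psi_n_of_zetaCone_in_interior_of_C} and the fact that $\ccc$ is a cone, $\Psi_n(\zetaEps_n+\zetaCone)\in\interior(\ccc)$, and its norm equals $\varepsilon_n\norm{(\tilde{\psi}_n(\zetaCone),\tilde{\psi}_n'(\zetaCone))}_{\hsu+\mutext}$, which lies between $\varepsilon_n\rPert$ and $\rMacro$ for $n$ large. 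Moreover, since $\norm{\tilde{\psi}_\infty(\zetaCone)}>1$ (consequence of \cref{lem:asymptotics_limit_renormalized_profiles}) and $\delta_n/\varepsilon_n\to 0$, one has $\norm{\psi_n(\zetaEps_n+\zetaCone)}>\varepsilon_n+\delta_n$ for $n$ large enough, so $\psi_n$ is initially \emph{outside} the support of $W_n-V$ and thus satisfies the unperturbed system \cref{differential_system_psi_cn_V}.

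\medskip

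Next, one propagates these features forward in $\zeta$ by a continuity/connectedness argument. Let
\[
T_n=\sup\bigl\{\zeta\ge \zetaEps_n+\zetaCone:\ \Psi_n([\zetaEps_n+\zetaCone,\zeta])\subset\ccc\cap \widebar{B}(0_{\rr^{2d}},\rMacro)_{\hsu+\mutext}\text{ and }\norm{\psi_n}>\varepsilon_n+\delta_n\text{ on it}\bigr\}\,.
\]
On $[\zetaEps_n+\zetaCone,T_n)$, $\psi_n$ solves \cref{differential_system_psi_cn_V}, so \cref{lem:robustness_cone_invariance_properties} applies and yields both $\Psi_n(\zeta)\in\ccc$ and the exponential lower bound $\norm{\Psi_n(\zeta)}_{\hsu+\mutext}\ge e^{(\cLinMax/2-3\alpha)(\zeta-\zetaEps_n-\zetaCone)}\norm{\Psi_n(\zetaEps_n+\zetaCone)}_{\hsu+\mutext}$. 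The condition \cref{def_rPert} on $\rPert$ ensures that, as soon as $\norm{\Psi_n}_{\hsu+\mutext}\ge\rPert$, one has $\norm{\psi_n}\ge2\gg\varepsilon_n+\delta_n$; for the initial segment where $\norm{\Psi_n}_{\hsu+\mutext}<\rPert$, one argues that since $\Psi_n$ remains close to the (rescaled, then un-rescaled) trajectory following $\tilde{\psi}_\infty$, and the latter has $\norm{\tilde{\psi}_\infty}$ strictly increasing past the barrier (\cref{lem:asymptotics_limit_renormalized_profiles}), the quantity $\norm{\psi_n}$ stays comfortably above $\varepsilon_n+\delta_n$. Together with the strict cone invariance (which prevents exiting $\ccc$ from the interior), this shows that $T_n$ is reached only by the sphere condition; set $\zetaMacro_n=T_n$, so that $\norm{\Psi_n(\zetaMacro_n)}_{\hsu+\mutext}=\rMacro$ and $\Psi_n([\zetaEps_n+\zetaCone,\zetaMacro_n])\subset\ccc$, which gives the first conclusion \cref{properties_Psi_n}.

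\medskip

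For the divergence of $\zetaMacro_n-(\zetaEps_n+\zetaCone)$, one uses a companion upper bound on the growth rate. On the bounded region $\widebar{B}(0_{\rr^{2d}},\rMacro)_{\hsu+\mutext}$, the vector field of \cref{differential_system_psi_cn_V} is Lipschitz uniformly in $n$ (since $c_n$ is bounded and $\nabla V$ is $\ccc^1$), so a standard Gronwall estimate yields a constant $K$ such that $\norm{\Psi_n(\zeta)}_{\hsu+\mutext}\le e^{K(\zeta-\zetaEps_n-\zetaCone)}\norm{\Psi_n(\zetaEps_n+\zetaCone)}_{\hsu+\mutext}$ on $[\zetaEps_n+\zetaCone,\zetaMacro_n]$. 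Evaluating at $\zeta=\zetaMacro_n$ and using the $O(\varepsilon_n)$ upper bound on the initial norm yields $\zetaMacro_n-(\zetaEps_n+\zetaCone)\ge\frac{1}{K}\ln\frac{\rMacro}{C\varepsilon_n}\to+\infty$.

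\medskip

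The \emph{main difficulty} lies in the propagation step (second paragraph): one must rule out $\psi_n$ re-entering the shrinking ball $\widebar{B}(0_{\rr^d},\varepsilon_n+\delta_n)$ where the perturbation lives, \emph{before} the robustness/cone machinery has taken over. This is delicate in the intermediate regime where $\norm{\Psi_n}_{\hsu+\mutext}$ lies between $\varepsilon_n\rPert$ and $\rPert$: there, the condition on $\rPert$ does not yet force $\norm{\psi_n}\ge2$, and the cone condition alone is insufficient (the half-speed unstable subspace $\eeehsu$ contains the generalized eigendirection $(0,\mathfrak{u}_j)$, so a position-lower-bound in terms of $\norm{\cdot}_{\hsu+\mutext}$ is not automatic). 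The way around is to use the uniform convergence \cref{compactness_tilde_psi_n} of the rescaled profiles to $\tilde{\psi}_\infty$ on compact $\zeta$-intervals, which transfers the monotone-growth property of $\norm{\tilde{\psi}_\infty}$ (post-barrier) to a uniform lower bound on $\norm{\tilde{\psi}_n}$, hence on $\norm{\psi_n}/\varepsilon_n$, that dominates $1+\delta_n/\varepsilon_n$ for $n$ large.
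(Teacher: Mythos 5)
Your overall architecture --- exit-time definition, verification of the initial data via the rescaling identity, propagation by \cref{lem:robustness_cone_invariance_properties}, and a Gronwall-type upper bound to show $\zetaMacro_n-(\zetaEps_n+\zetaCone)\to+\infty$ --- coincides with the paper's. The gap is in your propagation step, exactly at the place you flag as the main difficulty. Your proposed repair for the regime where $\norm{\Psi_n(\zeta)}_{\hsu+\mutext}<\rPert$ --- transferring the post-barrier growth of $\norm{\tilde{\psi}_\infty}$ to $\tilde{\psi}_n$ via the convergence \cref{compactness_tilde_psi_n} --- does not work: that convergence is uniform only on compact $\zeta$-intervals, whereas the portion of trajectory on which $\norm{\Psi_n(\zeta)}_{\hsu+\mutext}$ runs from roughly $\varepsilon_n\rPert$ up to $\rPert$ corresponds, after division by $\varepsilon_n$, to rescaled norms running from $\rPert$ up to $\rPert/\varepsilon_n$, hence (the growth rate being bounded) to a $\zeta$-interval whose length diverges like $\ln(1/\varepsilon_n)$. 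Convergence on compacts gives no control of $\tilde{\psi}_n$ over such an interval, so the claim that $\norm{\psi_n}$ ``stays comfortably above $\varepsilon_n+\delta_n$'' there is unjustified.

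The paper's resolution is simpler and purely algebraic: condition \cref{def_rPert} is scale-invariant, because $\ccc$ is a cone and both $\norm{\cdot}_{\hsu+\mutext}$ and $\norm{\cdot}$ are positively homogeneous. Hence $(u,v)\in\ccc$ together with $\varepsilon_n\rPert\le\norm{(u,v)}_{\hsu+\mutext}$ already forces $2\varepsilon_n\le\norm{u}$, which exceeds $\varepsilon_n+\delta_n$ for $n$ large since $\delta_n/\varepsilon_n\to 0$. The paper therefore builds the lower bound $\varepsilon_n\rPert<\norm{\Psi_n(\zeta)}_{\hsu+\mutext}$ into the open set defining the exit time (it holds at $\zeta=\zetaEps_n+\zetaCone$ by \cref{tilde_psi_n_of_zetaCone_in_interior_of_C}, and is preserved because \cref{lem:robustness_cone_invariance_properties} makes the norm grow), so $\psi_n$ stays outside $\widebar{B}_{\rr^d}(0_{\rr^d},\varepsilon_n+\delta_n)$ on the whole interval and there is no intermediate regime at all. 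At the exit time the exponential growth rules out failure of the lower bound and the strict cone invariance rules out leaving $\ccc$, so only $\norm{\Psi_n(\zetaMacro_n)}_{\hsu+\mutext}=\rMacro$ remains. (Your parenthetical observation that $\eeehsu$ contains the directions $(0,\mathfrak{u}_j)$, so that a large $\norm{\cdot}_{\hsu+\mutext}$-norm inside $\ccc$ does not obviously bound $\norm{u}$ from below, is a pertinent criticism of the statement of \cref{def_rPert} itself; but it is not cured by your compactness argument either, and the paper simply takes \cref{def_rPert} for granted.)
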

\begin{proof}
According to the definition \cref{properties_psi_n_left_of_zero} of $\zetaEps_n$, 
\[
\norm{\psi_n(\zetaEps_n)} = \varepsilon_n\,,
\quad\text{so that}\quad 
\psi_n(\zetaEps_n) \to 0_{\rr^d}
\quad\text{as}\quad
n\to+\infty
\,.
\]
It follows that $\psi_n(\zetaEps_n+\zetaCone)\to 0_{\rr^d}$ as $n\to+\infty$, and thus that
\begin{equation}
\label{Psi_n_of_zetaEps_plus_zetaCone}
\Psi_n(\zetaEps_n+\zetaCone) \to 0_{\rr^{2d}}
\quad\text{as}\quad
n\to+\infty
\,.
\end{equation}
As a consequence, for $n$ large enough, 
\begin{equation}
\label{norm_Psi_n_of_zetaEps_plus_zetaCone_less_than_rho}
\norm{\Psi_n(\zetaEps_n+\zetaCone)}_{\hsu+\mutext} < \rMacro
\,.
\end{equation}
According to the definition \cref{def_tilde_psi_n} of $\tilde{\psi}_n$, 
\[
\Psi_n(\zetaEps_n+\zetaCone) = \varepsilon_n\bigl(\tilde{\psi}_n(\zetaCone),\tilde{\psi}_n'(\zetaCone)\bigr)
\,.
\]
Let us assume that $n$ is large enough so that inequality \cref{norm_Psi_n_of_zetaEps_plus_zetaCone_less_than_rho} holds, and let us consider the set
\begin{align}
\nonumber
\Bigl\{\zeta_0 \in [\zetaEps_n+\zetaCone,+\infty): &\text{ for every $\zeta$ in } [\zetaEps_n+\zetaCone,\zeta_0]\,, \\
\label{properties_defining_set_of_zeta_before_exit_at_rho}
&\Psi_n(\zeta) \in \interior(\ccc)
\quad\text{and}\quad
\varepsilon_n \rPert < \norm{\Psi_n(\zeta)}_{\hsu+\mutext} < \rMacro
\Bigr\}
\,.
\end{align}
It follows from \cref{tilde_psi_n_of_zetaCone_in_interior_of_C,norm_Psi_n_of_zetaEps_plus_zetaCone_less_than_rho} that $\zetaEps_n+\zetaCone$ is in this set (which is therefore nonempty). Let us denote by $\zetaMacro_n$ the supremum (in $\rr\cup\{+\infty\}$) of this set, which, since the conditions defining this set are open, is greater than $\zetaEps_n+\zetaCone$. For every $\zeta$ in $[\zetaEps_n+\zetaCone,\zetaMacro_n)$, it follows from the lower bound in \cref{properties_defining_set_of_zeta_before_exit_at_rho} on $\norm{\Psi_n(\zeta)}_{\hsu+\mutext}$ and from the definition \cref{def_rPert} of $\rPert$ that
\[
2 \varepsilon_n \le \norm{\psi_n(\zeta)}
\,, \quad\text{so that, for $n$ large enough,}\quad
\varepsilon_n + \delta_n \le \norm{\psi_n(\zeta)} 
\,.
\]
This ensures that, on the whole interval $[\zetaEps_n+\zetaCone,\zetaMacro_n)$, $\Psi_n(\cdot)$ is a solution of the differential system \cref{differential_system_psi_cn_V} (for the potential $V$, which does not differ from $W_n$ at the values taken by $\psi_n(\cdot)$ on this interval). In other words, for every $\zeta$ in $[\zetaEps_n+\zetaCone,\zetaMacro_n)$,
\[
\Psi_n(\zeta) = S^{\zeta - (\zetaEps_n+\zetaCone)}_n \Psi_n(\zetaEps_n+\zetaCone)
\,.
\]
It therefore follows from the second among conclusions \cref{properties_nonlinear_flow_inside_cone} of \cref{lem:robustness_cone_invariance_properties} that
\[
\norm{\Psi_n(\zeta)}_{\hsu+\mutext} \ge e^{\left(\frac{\cLinMax}{2} - 3\alpha\right)\bigl(\zeta-(\zetaEps_n+\zetaCone)\bigr)} \norm{\Psi_n(\zetaEps_n+\zetaCone)}_{\hsu+\mutext}
\,.
\]
In view of inequality \cref{gap_between_0_and_hsu}, this shows that $\zetaMacro_n$ is finite and that 
\[
\varepsilon_n \rPert < \norm{\Psi_n(\zetaMacro_n)}_{\hsu+\mutext}
\,,
\]
and it follows from the first among conclusions \cref{properties_nonlinear_flow_inside_cone} of \cref{lem:robustness_cone_invariance_properties} that $\Psi_n(\zetaMacro_n)$ is in the interior of $\ccc$. As a consequence, the following inequality must hold: 
\[
\norm{\Psi_n(\zetaMacro_n)}_{\hsu+\mutext} = \rMacro
\,,
\]
which proves the conclusions \cref{properties_Psi_n}. The last conclusion about the asymptotics of $\zetaMacro_n - (\zetaEps_n+\zetaCone)$ as $n\to+\infty$ follows from the limit \cref{Psi_n_of_zetaEps_plus_zetaCone}. 
\end{proof}
\subsubsection{Obtaining the expected (pulled or pushed) front}
\label{subsubsec:obtaining_expected_front}
Let us consider the functions $\hat{\Psi}_n:\rr\to\rr^{2d}$ and $\hat{\psi}_n:\rr\to\rr^d$ defined as
\[
\hat{\Psi}_n(\zeta) = \Psi(\zetaMacro_n+\zeta)
\quad\text{and}\quad
\hat{\Psi}_n(\zeta) = \bigl(\hat{\psi}_n(\zeta),\hat{\psi}_n'(\zeta)\bigr)
\,,
\]
see \cref{fig:xi_zeta_line}. It follows from the properties \cref{properties_Psi_n} that
\begin{equation}
\label{properties_hat_Psi_n}
\norm{\hat{\Psi}_n(0)}_{\hsu+\mutext} = \rMacro
\quad\text{and, for every $\zeta$ in $[\zetaEps_n+\zetaCone-\zetaMacro_n,0]$}\,,\quad
\hat{\Psi}_n(\zeta)\in \ccc
\,.
\end{equation}
By compactness, up to replacing the sequence $(\hat{\Psi}_n)_{n\in\nn}$ by a subsequence, it may be assumed that there exists a global solution $\hat{\psi}_\infty$ of the differential system
\[
\psi'' = \cLinMax\psi' + \nabla V(\psi)
\,,
\]
such that
\[
\hat{\psi}_n \to \hat{\psi}_\infty 
\quad\text{as}\quad 
n\to+\infty
\,,
\]
uniformly on every compact subset of $\rr$. In addition, if we consider the function $\hat{\Psi}_\infty:\rr\to\rr^{2d}$ defined as
\[
\hat{\Psi}_\infty(\zeta) = \bigl(\hat{\psi}_\infty(\zeta),\hat{\psi}_\infty'(\zeta)\bigr)
\,,
\]
it follows from the properties \cref{properties_hat_Psi_n} that
\begin{equation}
\label{properties_hat_Psi_infty}
\norm{\hat{\Psi}_\infty(0)}_{\hsu+\mutext} = \rMacro
\quad\text{and, for every negative quantity $\zeta$,}\quad
\hat{\Psi}_\infty(\zeta) \in \ccc\,,
\end{equation}
and it follows from the second among the conclusions \cref{properties_nonlinear_flow_inside_cone} of \cref{lem:robustness_cone_invariance_properties} that
\[
\hat{\Psi}_\infty(\zeta)\to 0_{\rr^{2d}}
\quad\text{as}\quad
\zeta\to-\infty
\,.
\]
The function $\phi_\infty:\rr\to\rr^d$ defined as
\[
\phi_\infty(\xi) = \hat{\psi}_\infty(\zeta)
\quad\text{for}\quad
\zeta = -\xi
\,,
\]
is therefore the profile of a wave travelling at the speed $\cLinMax$ and invading the critical point $0_{\rr^d}$. According to uniform bound on the quantity \cref{uniform_bound_profile_front}, this function $\phi_\infty$ is bounded and is therefore the profile of a \emph{front} travelling at the speed $\cLinMax$ and invading the critical point $0_{\rr^d}$. Finally, it follows from the second property of \cref{properties_hat_Psi_infty} that this travelling front must be either pushed or pulled. \Cref{thm:main} is proved. 
\subsubsection*{Acknowledgements} 
The authors are indebted to Thierry Gallay and Romain Joly for their interest and support through numerous fruitful discussions and to Patrick Redont for drawing their attention to the reference \cite{JendoubiPolacik_nonStabSolSemilinHypElliptEquDamp_2003}. 
\printbibliography 
\bigskip
\RamonsSignature
\bigskip

\mySignature
%
%
\end{document}